\documentclass[12pt,a4paper,twoside,english,american]{scrartcl}
\usepackage{helvet}
\usepackage[T1]{fontenc}
\usepackage[latin9]{inputenc}
\setcounter{tocdepth}{5}
\setlength{\parskip}{\medskipamount}
\setlength{\parindent}{0pt}
\usepackage{textcomp}
\usepackage{amsmath}
\usepackage{amssymb}

\makeatletter


\newcommand{\lyxmathsym}[1]{\ifmmode\begingroup\def\b@ld{bold}
  \text{\ifx\math@version\b@ld\bfseries\fi#1}\endgroup\else#1\fi}

 \usepackage{amsthm}
 \theoremstyle{plain}
\newtheorem{thm}{Theorem}[section]
  \theoremstyle{remark}
  \newtheorem{rem}[thm]{Remark}
  \theoremstyle{definition}
  \newtheorem{defn}[thm]{Definition}
 \theoremstyle{definition}
  \newtheorem{example}[thm]{Example}
  \theoremstyle{plain}
  \newtheorem{cor}[thm]{Corollary}


\usepackage[T1]{fontenc}    

\usepackage{a4wide}        
\addtolength{\headheight}{2pt}
\usepackage{tu-preprint}
\usepackage{amsmath}

\usepackage{euscript}
\usepackage{mathabx}
\allowdisplaybreaks[1] 
\usepackage{amsfonts}
\newenvironment{keywords}{ \noindent\footnotesize\textbf{Keywords and phrases:}}{}

\newenvironment{class}{\noindent\footnotesize\textbf{Mathematics subject classification 2000:}}{}

\usepackage{color,tu-preprint}




\newcommand*{\dive}{\operatorname{div}}

\newcommand*{\curl}{\operatorname{curl}}

\newcommand*{\grad}{\operatorname{grad}}

\renewcommand*{\i}{\mathrm{i}}




\DeclareMathAccent{\Circ}{\mathalpha}{operators}{"17}
\newcommand{\interior}[1]{\Circ{#1}}
\renewcommand{\Im}{\operatorname{\mathfrak{Im}}}
\renewcommand{\Re}{\operatorname{\mathfrak{Re}}}


\renewcommand*{\epsilon}{\varepsilon}
\renewcommand*{\theta}{\vartheta}
\renewcommand*{\nu}{\varrho}

\arraycolsep2pt

\makeatother

\usepackage{babel}
\begin{document}
\selectlanguage{english}%
\institut{Institut f\"ur Analysis}

\preprintnumber{MATH-AN-03-2012}

\preprinttitle{The Elusive Drude-Born-Fedorov Model for Chiral Electromagnetic Media.}

\author{Henrik Freymond and Rainer Picard}

\makepreprinttitlepage

\selectlanguage{american}%
\setcounter{section}{-1}

\date{}

\title{The Elusive Drude-Born-Fedorov Model for Chiral Electromagnetic Media.}

\author{Henrik Freymond \& Rainer Picard\\
Institut für Analysis,Fachrichtung Mathematik\\
 Technische Universität Dresden\\
 Germany\\
 rainer.picard@tu-dresden.de }
\maketitle
\begin{abstract}
Electro-magnetic wave propagation in more complex linear materials
such as bi-anisotropic media have come to a considerable attention
within the last fifteen to twenty years. The Drude-Born-Fedorov model
has been extensively studied mostly in the time-harmonic case as a
model for chiral media. In the physically relevant time-dependent
case the record is much less convincing. In this paper we focus on
this case and analyze the Drude-Born-Fedorov model in the light of
recently developed Hilbert space approach to evolutionary problems.
The solution theory will be developed in the framework of extrapolation
spaces (Sobolev lattices).
\end{abstract}
\begin{keywords}
Maxwell's equations, bi-anisotropic material,  chiral media, extrapolation spaces, Sobolev lattices  \end{keywords}

\begin{class}
35Q61, 78A25
\end{class}

\tableofcontents{}

\section{Introduction}

The evolutionary system of the macroscopic Maxwell's equations 
\begin{eqnarray*}
\textrm{curl }H-\partial_{0}D & = & j\,,\\
\textrm{curl }E+\partial_{0}B & = & 0\,,
\end{eqnarray*}
complemented by suitable boundary conditions such as the vanishing
of the tangential component of the electric field $E$ and initial
data for $D$ and $B$ is completed by a material law connecting the
combined six component vector field $\left(D,B\right)$ composed from
the electric displacement current $D$ and the magnetic induction
$B$ with the electro-magnetic field $\left(E,H\right)$ ($\partial_{0}$
denotes the time derivative). In the simplest case it is assumed that
\begin{eqnarray*}
D & = & \epsilon\, E\\
B & = & \mu\, H
\end{eqnarray*}
with constant coefficient $\epsilon,\mu\in\mathbb{R}_{>0}$ (permittivity
$\epsilon$, permeability $\mu$). This case is referred to as the
case of isotropic and homogeneous media. There is a long history of
various description of more complex electromagnetic media and a shorter
one for their mathematical treatment. As -- in a sense -- the conclusion
of the treatment of the anisotropic and inhomogeneous media case we
refer to the presentation of the Maxwell system in the book by R.
Leis \cite{Leis:Buch:2} from 1986 and the references given there.
The functional analytical approach utilized shows that indeed a mathematical
solution theory can be obtained for rather general media merely requiring
that $\epsilon,\mu$ are time-independent, bounded, self-adjoint,
strictly positive mappings in $L^{2}$-type spaces of vector fields
in an non-empty open set $\Omega\subseteq\mathbb{R}^{3}$ containing
the media characterized by $\epsilon$ and $\mu$, which covers the
case of real multiplicative matrix-valued operations with in $\Omega$
$L^{\infty}$-bounded entries as a special case. Although, the necessary
concepts are -- as one says -- well-known since the late seventies
of last century, it seems not too widely adopted that by appropriately
generalizing the concept of assumption of boundary conditions boundary
singularities such as corners, edges, cusps, even fractal boundaries
can be included. The price to be paid for this is a more subtle approach
exploiting the full power of functional analytic concepts (as a general
reference for functional analytical concepts see \cite{Kato,Yosida_6th}).
As it turns out, however, this can be achieved purely in a Hilbert
space setting, thus reducing conveniently the conceptual complexity
of our considerations. 

Writing $\left\langle \:\cdot\:|\:\cdot\:\right\rangle _{0}$ for
an $L^{2}$-type inner product regardless of the number of components,
we have 
\[
\left\langle E\Big|H\right\rangle _{0}=\sum_{k=1}^{3}\left\langle E_{k}\Big|H_{k}\right\rangle _{0}
\]
for vector fields 
\[
E=\left(E_{1},E_{2},E_{3}\right)=\left(\begin{array}{c}
E_{1}\\
E_{2}\\
E_{3}
\end{array}\right),\: H=\left(H_{1},H_{2},H_{3}\right)=\left(\begin{array}{c}
H_{1}\\
H_{2}\\
H_{3}
\end{array}\right)
\]
in $L^{2}\left(\Omega\right)^{3}$. For ease of notation we shall
rarely note the number of components and write e.g. $L^{2}\left(\Omega\right)$
instead of $L^{2}\left(\Omega\right)^{3}$, since the number of components
will always be clear from the context. It is clear from the arguments
e.g. in \cite{0579.58030}, although the impact and relevance of this
observation for the more recent discussion of electromagnetic meta-materials
could not be realized at the time, that the mathematical methods developed
in connection with anisotropic, inhomogeneous media extend to the
case of material laws of the block matrix form
\[
\left(\begin{array}{c}
D\\
B
\end{array}\right)=M_{0}\left(\begin{array}{c}
E\\
H
\end{array}\right)
\]
with
\[
M_{0}=\left(\begin{array}{cc}
\epsilon & \beta\\
\beta^{*} & \mu
\end{array}\right)
\]
in an obvious way, provided that the entries $\epsilon,\,\mu,\,\beta$
are time-independent, bounded, linear mappings in $L^{2}$-type spaces
of vector fields such that the block matrix operator
\begin{eqnarray*}
M_{0}:L^{2}\left(\Omega\right) & \to & L^{2}\left(\Omega\right)\\
\left(\begin{array}{c}
E\\
H
\end{array}\right) & \mapsto & \left(\begin{array}{cc}
\epsilon & \beta\\
\beta^{*} & \mu
\end{array}\right)\left(\begin{array}{c}
E\\
H
\end{array}\right)
\end{eqnarray*}
is still self-adjoint and strictly positive.%
\footnote{This is the case if and only if $\epsilon,\mu-\beta^{*}\epsilon^{-1}\beta$
are selfadjoint and strictly positive, which amounts to a smallness
condition on the off-diagonal entry $\beta$.%
}  

Based on a structural observation described in \cite{2009-2} we shall
here consider in the following a class of well-posed evolutionary
equations in even more complex media. Such media have become of strong
interest in recent decades due to their prospects in view of their
interesting physical properties (for a survey we refer the reader
to \cite{LSTV,0866.00023}). This problem class, which conveniently
contains materials with memory, will lead us to the so-called Drude-Born-Fedorov
model for chiral media, which has been initially introduced as an
approximation of the actual material description in an attempt to
avoid non-local-in-time terms (optical response).

The model starts from a material relation of the form
\[
D=\epsilon\left(E+\eta\:\curl E\right),\: B=\mu\left(H+\eta\:\curl H\right),
\]
where $\epsilon,\mu\in\mathbb{R}_{>0}$ and $\eta\:\in\mathbb{R}$
is the so-called chirality parameter.

There is a long list of publications dealing with this model. Despite
this fact and contrary to various claims made, it appears that this
model has not been fully understood in sufficient generality. It is
due to this state of affairs that we have chosen to speak of the {}``elusive''
Drude-Born-Fedorov model. So has it only more recently been understood
that the model has to be considered in the -- much less forgiving
-- dynamic case, \cite{1052.78002,1062.78004,1075.78005,1126.78002,pre05506047},
rather than the so-called time-harmonic case. 

We will show that in the framework of our approach from \cite{2009-2}
the Drude-Born-Fedorov model will have to be considered as a rather
degenerate case, which, however, by some additional considerations
can be elegantly embedded into the standard theory. Indeed, we are
led to a simple evolution equation with a bounded generator. This
has in essence already been observed in \cite{1177.35042} for simply
connected domains with smooth boundary. It has gone unnoticed, however,
that the result can be extended to fairly arbitrary open sets with
a boundary permitting a suitable local compact embedding result, \cite{0926.35104,0935.35029,Filonov00}.
As a by-product we can discuss well-posedness for a class of bi-anisotropic
materials with \emph{modified} Drude-Born-Fedorov type material behavior,
some cases of which have found previous attention, compare \cite[Morro 2002]{A2002285},
\cite[Sj{\"o}berg 2008]{0022-3727-41-15-155412}.

\section{Space-Time Evolution Equations\label{sec:Space-Time-Evolution-Equations} }

\subsection{A brief summary of Sobolev chains and lattices }

We recall first the standard construction of chains of Hilbert spaces
associated with a normal operator $O$ in a Hilbert space $X$ with
$0$ in its resolvent set, see e.g. \cite{PIC_2010:1889}. Defining
$H_{k}\left(O\right)$ as the completion of $D\left(O^{k}\right)$
with respect to the norm $\left|\:\cdot\:\right|_{k}$ induced by
the inner product $\left\langle \:\cdot\:|\:\cdot\:\right\rangle _{k}$
given by 
\[
\left(f,g\right)\mapsto\left\langle O^{k}f|O^{k}g\right\rangle _{X}
\]
for $k\in\mathbb{Z}$, we obtain a chain $\left(H_{k}\left(O\right)\right)_{k\in\mathbb{Z}}$
of Hilbert space, where the order is given by canonical, continuous
and dense embeddings 
\[
H_{k+1}\left(O\right)\hookrightarrow H_{k}\left(O\right),\: k\in\mathbb{Z}.
\]
Note that $H_{1}\left(O\right)$ is the domain of $O$ and for $k\in\mathbb{N}$
\[
H_{k}\left(O\right)\hookrightarrow X\hookrightarrow H_{-k}\left(O\right)
\]
is a Gelfand triple. By construction 
\begin{eqnarray*}
H_{1}\left(O\right) & \to & H_{0}\left(O\right)\\
f & \mapsto & Of
\end{eqnarray*}
is a unitary mapping and it follows by induction that for $k\in\mathbb{N}$
also 
\begin{eqnarray*}
H_{k+1}\left(O\right) & \to & H_{k}\left(O\right)\\
f & \mapsto & Of
\end{eqnarray*}
is unitary. Moreover, $O$ extends to a unitary mapping
\begin{eqnarray*}
H_{k+1}\left(O\right) & \to & H_{k}\left(O\right)\\
f & \mapsto & Of
\end{eqnarray*}
also for negative $k$ and so to all $k\in\mathbb{Z}$. 
\begin{rem}
Since $O=\Re O+\i\Im O$ is a normal operator in $H_{0}\left(O\right)$
we have
\begin{eqnarray*}
O^{*} & = & \Re O-\i\Im O
\end{eqnarray*}
Since $D\left(O\right)=D\left(O^{*}\right)$ and 
\[
\left|Of\right|_{X}=\left|O^{*}f\right|_{X}
\]
that also $O$ has a unitary extension
\begin{eqnarray*}
H_{k+1}\left(O\right) & \to & H_{k}\left(O\right)\\
f & \mapsto & O^{*}f
\end{eqnarray*}
for $k\in\mathbb{Z}$. Keeping the notation $O^{*}$ for this extension,
as we shall do, leads to a possible confusion, since $O^{*}$ could
be misinterpreted as the adjoint, i.e. the inverse of $O$ \emph{as
a unitary mapping}. To avoid this misunderstanding we shall \emph{not}
write the inverse of a unitary operator as its adjoint. 

With this convention we can safely say that we have established $O^{s}$
and $\left(O^{*}\right)^{s}$ as unitary mappings 
\begin{eqnarray*}
H_{k+s}\left(O\right) & \to & H_{k}\left(O\right)\\
f & \mapsto & O^{s}f
\end{eqnarray*}
and
\begin{eqnarray*}
H_{k+s}\left(O\right) & \to & H_{k}\left(O\right)\\
f & \mapsto & \left(O^{*}\right)^{s}f
\end{eqnarray*}
for $s,k\in\mathbb{Z}$.
\end{rem}
We shall utilize this abstract construction for various special cases.
Note that for $O\in L\left(H,H\right)$ we have 
\[
H_{s}\left(O\right)=H
\]
as topological linear spaces with merely different inner products
(inducing equivalent norms) in the different Hilbert spaces $H_{s}\left(O\right)$,
$s\in\mathbb{Z}$. This indicates that considering continuous linear
operators $O$ does not lead to interesting chains.

A particular instance of this construction is if $O$ is the time-derivative
$\partial_{0}$. We recall, e.g. from \cite{2009-2,PIC_2010:1889},
that differentiation considered in the complex Hilbert space $H_{\nu,0}(\mathbb{R}):=\{f\in L_{\textnormal{loc}}^{2}(\mathbb{R})|(x\mapsto\exp(-\nu x)f(x))\in L^{2}(\mathbb{R})\}$,
$\nu\in\mathbb{R}\setminus\left\{ 0\right\} $, with inner product
\[
(f,g)\mapsto\langle f,g\rangle_{\nu,0}:=\int_{\mathbb{R}}f(x)^{*}g(x)\:\exp(-2\nu x)\: dx
\]
can indeed be established as a normal operator, which we denote by
$\partial_{0,\nu}$, with 
\[
\Re\partial_{0,\nu}=\nu.
\]
For $\Im\partial_{0,\nu}$ we have as a spectral representation the
Fourier-Laplace transform $\mathcal{L}_{\nu}:H_{\nu,0}(\mathbb{R})\to L^{2}\left(\mathbb{R}\right)$
given by the unitary extension of 
\begin{align*}
\interior C_{\infty}\left(\mathbb{R}\right)\subseteq H_{\nu,0}(\mathbb{R}) & \to L^{2}(\mathbb{R})\\
\phi & \mapsto\left(x\mapsto\frac{1}{\sqrt{2\pi}}\int_{\mathbb{R}}\exp\left(-\mathrm{i}xy\right)\;\exp\left(-\nu y\right)\phi(y)\; dy\right).
\end{align*}
 In other words, we have the unitary equivalence
\[
\Im\partial_{0,\nu}=\mathcal{L}_{\nu}^{-1}m\:\mathcal{L}_{\nu},
\]
where $m$ denotes the selfadjoint multiplication-by-argument operator
in $L^{2}\left(\mathbb{R}\right)$. Since $0$ is in the resolvent
set of $\partial_{0,\nu}^{-1}$ we have that $\partial_{0,\nu}^{-1}$
is an element of the Banach space $L\left(H_{\nu,0}(\mathbb{R}),H_{\nu,0}(\mathbb{R})\right)$
of continuous (left-total) linear mappings in $H_{\nu,0}(\mathbb{R})$.
Denoting generally the operator norm of the Banach space $L\left(X,Y\right)$
by $\left\Vert \:\cdot\;\right\Vert _{L\left(X,Y\right)}$, we get
for $\partial_{0}^{-1}$
\[
\left\Vert \partial_{0,\nu}^{-1}\right\Vert _{L\left(H_{\nu,0}(\mathbb{R}),H_{\nu,0}(\mathbb{R})\right)}=\frac{1}{\nu}.
\]
Not too surprisingly, we find for $\nu>0$ 
\[
\left(\partial_{0,\nu}^{-1}\varphi\right)\left(x\right)=\int_{-\infty}^{x}\varphi\left(t\right)\: dt
\]
and for $\nu<0$
\[
\left(\partial_{0,\nu}^{-1}\varphi\right)\left(x\right)=-\int_{x}^{\infty}\varphi\left(t\right)\: dt
\]
for all $\varphi\in\interior C_{\infty}\left(\mathbb{R}\right)$ and
$x\in\mathbb{R}$. Since we are interested in the forward causal situation,
we assume $\nu>0$ throughout. Moreover, in the following we shall
mostly write $\partial_{0}$ for $\partial_{0,\nu}$ if the choice
of $\nu$ is clear from the context. 

Thus, with $O=\partial_{0,\nu}$ in the above general construction
we obtain a chain $\left(H_{\nu,k}\left(\mathbb{R}\right)\right)_{k\in\mathbb{Z}}$
of Hilbert spaces
\[
H_{\nu,k}\left(\mathbb{R}\right)\::=H_{k}\left(\partial_{0,\nu}\right).
\]
Similarly, with $O=\i m+\nu$ in $L^{2}\left(\mathbb{R}\right)$ we
obtain the chain of polynomially weighted $L^{2}\left(\mathbb{R}\right)$-spaces
\[
\left(L_{k}^{2}\left(\mathbb{R}\right)\right)_{k\in\mathbb{Z}}
\]
with
\[
L_{k}^{2}\left(\mathbb{R}\right)\::=\left\{ f\in L^{2,\mathrm{loc}}\left(\mathbb{R}\right)|\,\left(\i m+\nu\right)^{k}f\in L^{2}\left(\mathbb{R}\right)\right\} =H_{k}\left(\i m+\nu\right)
\]
for $k\in\mathbb{Z}$.

Since the unitarily equivalent operators $\partial_{0,\nu}$ and $\i m+\nu$
can canonically be lifted to the $X$-valued case, $X$ an arbitrary
complex Hilbert space, we are lead to a corresponding chain $\left(H_{\nu,k}\left(\mathbb{R},X\right)\right)_{k\in\mathbb{Z}}$
and $\left(L_{k}^{2}\left(\mathbb{R},X\right)\right)_{k\in\mathbb{Z}}$
of $X$-valued generalized functions. The Fourier-Laplace transform
can also be lifted to the $X$-valued case yielding
\begin{eqnarray*}
H_{\nu,k}\left(\mathbb{R},X\right) & \to & L_{k}^{2}\left(\mathbb{R},X\right)\\
f & \mapsto & \mathcal{L}_{\nu}f
\end{eqnarray*}
as a unitary mapping for $k\in\mathbb{N}$ and by continuous extension,
keeping the notation $\mathcal{L}_{\nu}$ for the extension, also
for $k\in\mathbb{Z}$. Since $\mathcal{L}_{\nu}$ has been constructed
from a spectral representation of $\Im\partial_{0,\nu}$, we can utilize
the corresponding operator function calculus for functions of $\Im\partial_{0,\nu}$.
Noting that $\partial_{0,\nu}=\i\Im\partial_{0,\nu}+\nu$ is a function
of $\Im\partial_{0,\nu}$ we can define operator-valued functions
of $\partial_{0}.$
\begin{defn}
Let $r>\frac{1}{2\nu}>0$ and $M:B_{\mathbb{C}}(r,r)\to L(H,H)$ be
bounded and analytic, $H$ a Hilbert space. Then define 
\[
M\left(\partial_{0}^{-1}\right):=\mathbb{L}_{\nu}^{*}\: M\left(\frac{1}{\mathrm{i}m+\nu}\right)\:\mathbb{L}_{\nu},
\]
 where 
\[
M\left(\frac{1}{\mathrm{i}m+\nu}\right)\phi(t):=M\left(\frac{1}{\mathrm{i}t+\nu}\right)\phi(t)\quad(t\in\mathbb{R})
\]
for $\phi\in\interior C_{\infty}\left(\mathbb{R},H\right)$. \end{defn}
\begin{rem}
\label{Rem: rho-Independent} The definition of $M(\partial_{0}^{-1})$
is largely independent of the choice of $\nu$ in the sense that the
operators for two different parameters $\nu_{1},\nu_{2}$ coincide
on the intersection of the respective domains. 
\end{rem}
Simple examples are polynomials in $\partial_{0}^{-1}$ with operator
coefficients. A more exotic example of an analytic and bounded function
of $\partial_{0}^{-1}$ is the delay operator, which itself is a special
case of the time translation: 
\begin{example}
Let $r>0$, $\nu>\frac{1}{2r}$, $h\in\mathbb{R}$ and $u\in H_{\nu,0}(\mathbb{R},X)$.
We define 
\[
\tau_{h}u:=u(\:\cdot\:+h).
\]
 The operator $\tau_{h}\in L(H_{\nu,0}(\mathbb{R},X),H_{\nu,0}(\mathbb{R},X))$
is called a \emph{time-translation operator}. If $h<0$ the operator
$\tau_{h}$ is also called a \emph{delay operator}. In the latter
case the function 
\[
B_{\mathbb{C}}(r,r)\ni z\mapsto M(z):=\exp(z^{-1}h)
\]
 is analytic and uniformly bounded for every $r\in\mathbb{R}_{>0}$
(considered as an $L\left(X,X\right)$-valued function). An easy computation
shows for $u\in H_{\nu,0}\left(\mathbb{R},X\right)$ that 
\[
u(\:\cdot\:+h)=\mathbb{L}_{\nu}^{*}\exp((\mathrm{i}m+\nu)h)\mathbb{L}_{\nu}u=M(\partial_{0}^{-1})u=\exp(\left(\partial_{0}^{-1}\right)^{-1}h)\: u.
\]
This shows that
\[
\tau_{h}=\exp\left(h/\partial_{0}^{-1}\right)=\exp\left(h\partial_{0}\right).
\]

\end{example}
Another class of interesting bounded analytic functions of $\partial_{0}^{-1}$
are mappings produced by a temporal convolution with a suitable operator-valued
integral kernel. 

Let now $O$ denote a normal operator in Hilbert space $H$ with $0$
in the resolvent set. Then $O$ has a canonical extension to the time-dependent
case i.e. to $H_{\nu,0}\left(\mathbb{R},H\right)$. Then $\partial_{0}$
and $O$ become commuting normal operators and by combining the two
chains we obtain a Sobolev lattice in the sense of \cite{PDE_DeGruyter}
based on $\left(\partial_{0},O\right)$ yielding a family of Hilbert
spaces ($\nu\in\mathbb{R}\setminus\left\{ 0\right\} $) 
\[
\left(H_{\nu,k}\left(\mathbb{R},\: H_{s}\left(O\right)\right)\right)_{k,s\in\mathbb{Z}}
\]
 with norms $\left|\:\cdot\:\right|_{\nu,k,s}$ given by
\[
v\mapsto\left|\partial_{0}^{k}O^{s}v\right|_{H_{\nu,0}\left(\mathbb{R},\: H\right)}
\]
for $k,s\in\mathbb{Z}$. Note that
\[
H_{0}\left(O\right)=H
\]
independent of the particular choice of $O$.

\subsection{Abstract initial value problems}

We shall discuss equations of the form
\begin{equation}
\left(\partial_{0}M\left(\partial_{0}^{-1}\right)+A\right)U=\mathcal{J}+\delta\otimes W_{0},\label{eq:evo-abstract}
\end{equation}
where for simplicity we shall assume that $A$ is skew-selfadjoint
in a Hilbert space $H$ and $M$ is a regular material law in the
sense of \cite{Pi2009-1,PDE_DeGruyter}. More specifically we assume
that $M$ is of the form 
\[
M\left(z\right)=M_{0}+zM_{1}\left(z\right)
\]
where $M_{1}$ is an analytic and bounded $L\left(H,H\right)$-valued
function in a ball $B_{\mathbb{C}}\left(r,r\right)$ for some $r\in\mathbb{R}_{>0}$
and $M_{0}$ is a continuous, selfadjoint and strictly positive definite
operator in $H$. The operator $M\left(\partial_{0}^{-1}\right)$
is then to be understood in the sense of the operator-valued function
calculus associated with the selfadjoint operator $\Im\left(\partial_{0}\right)=\frac{1}{2\i}\left(\partial_{0}+\partial_{0}^{*}\right)$. 

For the data we assume
\[
\mathcal{J}\in H_{\nu,0}\left(\mathbb{R},\: H\right),\:\mathcal{J}=0\mbox{ on }\mathbb{R}_{<0},
\]
 and 
\[
W_{0}\in H,
\]
which makes (\ref{eq:evo-abstract}) an abstract initial value problem.
The appropriate setting turns out to be the Sobolev lattice 
\[
\left(H_{\nu,k}\left(\mathbb{R},\: H_{s}\left(\sqrt{M_{0}^{-1}}A\sqrt{M_{0}^{-1}}+1\right)\right)\right)_{k,s\in\mathbb{Z}},
\]
where, however, only the spaces with $s=-1,0,1$ and $k=-2,-1,0,\,1$
are actually utilized. From \cite{Pi2009-1,PDE_DeGruyter} we paraphrase
the following solution result on which our approach to the Drude-Born-Fedorov
model can conveniently be based. 
\begin{thm}
\label{SolutionTheory}The abstract initial value problem (\ref{eq:evo-abstract})
has a unique solution $U\in H_{\nu,-1}\left(\mathbb{R},H\right)$.
Moreover, 
\[
F\mapsto\left(\partial_{0}M\left(\partial_{0}^{-1}\right)+A\right)^{-1}F
\]
is a linear mapping in $L\left(H_{\nu,k}\left(\mathbb{R},\: H\right),H_{\nu,k}\left(\mathbb{R},\: H\right)\right),$
$k\in\mathbb{Z}.$ These mappings are causal in the sense that if
$F\in H_{\nu,k}\left(\mathbb{R},\: H\right)$ vanishes on the time
interval $]-\infty,\, a]$, then so does \textup{$\left(\partial_{0}M\left(\partial_{0}^{-1}\right)+A\right)^{-1}F$,
$a\in\mathbb{R}$, $k\in\mathbb{Z}$. In particular, we have
\[
U=0\mbox{ on }\mathbb{R}_{<0}.
\]
}
\end{thm}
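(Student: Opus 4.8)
The plan is to reduce the solvability of (\ref{eq:evo-abstract}) to the bounded invertibility of the closed operator
\[
B:=\overline{\partial_{0}M(\partial_{0}^{-1})+A}
\]
on the base space $H_{\nu,0}(\mathbb{R},H)$ for all sufficiently large $\nu>0$, and then to propagate this invertibility through the Sobolev lattice by commutation with $\partial_{0}$. First I would record that, since $M(z)=M_{0}+zM_{1}(z)$, the operator splits as
\[
\partial_{0}M(\partial_{0}^{-1})=\partial_{0}M_{0}+M_{1}(\partial_{0}^{-1}),
\]
where $M_{0}$ acts pointwise in time and hence commutes with $\partial_{0}$, while $M_{1}(\partial_{0}^{-1})$ is a \emph{bounded} function of $\partial_{0}^{-1}$: the spectrum of $\partial_{0}^{-1}$ lies on the boundary circle of $B_{\mathbb{C}}\left(\tfrac{1}{2\nu},\tfrac{1}{2\nu}\right)$, which for $\nu>\tfrac{1}{2r}$ is contained in $B_{\mathbb{C}}(r,r)$, so the function calculus gives $\left\Vert M_{1}(\partial_{0}^{-1})\right\Vert \leq\sup_{z\in B_{\mathbb{C}}(r,r)}\left\Vert M_{1}(z)\right\Vert =:c_{1}$.

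The central step is a coercivity (positive definiteness) estimate. For $u$ in the core $D(\partial_{0})\cap D(A)$ I would compute $\Re\langle Bu,u\rangle_{\nu,0}$. Using $\Re\partial_{0}=\nu$, the selfadjointness and strict positivity $\langle M_{0}\xi,\xi\rangle_{H}\geq c_{0}|\xi|_{H}^{2}$ of $M_{0}$, and that $M_{0}$ commutes with $\Im\partial_{0}$, the term $\Re\langle\partial_{0}M_{0}u,u\rangle_{\nu,0}$ equals $\nu\langle M_{0}u,u\rangle_{\nu,0}\geq\nu c_{0}\,|u|_{\nu,0}^{2}$; the bounded perturbation contributes at least $-c_{1}|u|_{\nu,0}^{2}$; and the skew-selfadjointness of $A$ makes $\Re\langle Au,u\rangle_{\nu,0}=0$. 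Hence
\[
\Re\langle Bu,u\rangle_{\nu,0}\geq(\nu c_{0}-c_{1})\,|u|_{\nu,0}^{2},
\]
which is strictly positive once $\nu>\max\{\tfrac{1}{2r},\,c_{1}/c_{0}\}$. The identical computation applies to $B^{*}$ (here one uses $\partial_{0}^{*}=2\nu-\partial_{0}$, so that $\Re\langle\partial_{0}^{*}v,v\rangle_{\nu,0}=\nu|v|_{\nu,0}^{2}$, and $A^{*}=-A$). The estimate for $B$ yields $|Bu|_{\nu,0}\geq(\nu c_{0}-c_{1})|u|_{\nu,0}$, so $B$ is injective with closed range; the estimate for $B^{*}$ gives $\ker(B^{*})=\{0\}$, so the range of $B$ is dense, hence all of $H_{\nu,0}(\mathbb{R},H)$. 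Therefore $B$ is bijective and $B^{-1}\in L(H_{\nu,0}(\mathbb{R},H),H_{\nu,0}(\mathbb{R},H))$ with $\left\Vert B^{-1}\right\Vert \leq(\nu c_{0}-c_{1})^{-1}$.

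To obtain the mapping property for all $k\in\mathbb{Z}$, and in particular the solution in $H_{\nu,-1}$, I would use that $B$ commutes with $\partial_{0}$ (both $\partial_{0}M(\partial_{0}^{-1})$ and the time-independent $A$ do). Since $\partial_{0}^{k}:H_{\nu,k}(\mathbb{R},H)\to H_{\nu,0}(\mathbb{R},H)$ is unitary, $B^{-1}$ restricted to $H_{\nu,k}(\mathbb{R},H)$ equals $\partial_{0}^{-k}B^{-1}\partial_{0}^{k}$ and is bounded on each $H_{\nu,k}(\mathbb{R},H)$ with the same norm; being a genuine two-sided inverse it also furnishes uniqueness on every level. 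As $\mathcal{J}\in H_{\nu,0}\subseteq H_{\nu,-1}$ and $\delta\otimes W_{0}\in H_{\nu,-1}(\mathbb{R},H)$ (because $\partial_{0}^{-1}(\delta\otimes W_{0})$ is the $W_{0}$-valued Heaviside function, which lies in $H_{\nu,0}$ for $\nu>0$), applying the $k=-1$ realization yields the unique $U=B^{-1}(\mathcal{J}+\delta\otimes W_{0})\in H_{\nu,-1}(\mathbb{R},H)$.

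The part I expect to be most delicate is causality, since $B^{-1}$ is not itself a function of $\partial_{0}$ (it contains $A$), so support preservation cannot be read off directly. The key observation is that the coercivity bound $\left\Vert B^{-1}\right\Vert \leq(\nu c_{0}-c_{1})^{-1}$ holds uniformly for all $\nu\geq\nu_{0}$. One then invokes the causality criterion of the framework of \cite{Pi2009-1,PDE_DeGruyter}: an operator commuting with $\partial_{0}$ whose norm stays bounded as $\nu\to\infty$ (equivalently, via the Fourier-Laplace transform $\mathcal{L}_{\nu}$, an operator-valued symbol analytic and uniformly bounded on a right half-plane) is causal, by a Paley-Wiener argument. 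Applying this to $B^{-1}$ shows that data vanishing on $]-\infty,a]$ are mapped to solutions vanishing on $]-\infty,a]$. Since $\mathcal{J}=0$ on $\mathbb{R}_{<0}$ and $\delta\otimes W_{0}$ is supported in $\{0\}$, causality with $a=0$ gives $U=0$ on $\mathbb{R}_{<0}$.
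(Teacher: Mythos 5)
The paper offers no proof of Theorem \ref{SolutionTheory} at all---it is explicitly paraphrased from \cite{Pi2009-1,PDE_DeGruyter}---and your argument is a correct reconstruction of the standard proof in those references: the coercivity estimate $\Re\langle Bu,u\rangle_{\nu,0}\geq(\nu c_{0}-c_{1})|u|_{\nu,0}^{2}$ together with the matching estimate for $B^{*}$ gives bounded invertibility on $H_{\nu,0}(\mathbb{R},H)$, commutation with the unitary $\partial_{0}^{k}$ lifts this to every level of the lattice, and causality follows from the Paley--Wiener-type criterion. The one point requiring care is that uniform boundedness of $\left\Vert B^{-1}\right\Vert$ for $\nu\geq\nu_{0}$ alone does not suffice for causality---one needs the symbol $z\mapsto\left(zM(z^{-1})+A\right)^{-1}$ to be analytic and uniformly bounded on the half-plane $\Re z>\nu_{0}$---but you state exactly this equivalent formulation parenthetically, so the argument is complete.
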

To link up with a more classical interpretation of the assumption
of the initial data we record the following regularity result.
\begin{thm}
\label{RegRes}Let $U\in H_{\nu,-1}\left(\mathbb{R},H\right)$ be
the unique solution of the abstract initial value problem (\ref{eq:evo-abstract}).
Then, we also have
\begin{equation}
U\in H_{\nu,0}\left(\mathbb{R},H\right)\label{eq:cont}
\end{equation}
and $U$ is a continuous%
\footnote{In the usual sense of having a continuous representer.%
} $H$-valued function on $\mathbb{R}\setminus\left\{ 0\right\} $.
Moreover, 
\begin{equation}
U\left(0+\right)=M_{0}^{-1}W_{0},\label{eq:ic}
\end{equation}
where the limit is taken in $H.$\end{thm}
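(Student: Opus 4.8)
The plan is to eliminate the unbounded operator $A$ from the analysis by encoding it in the unitary group it generates after a symmetrisation, and to absorb the nonlocal lower order term $M_{1}(\partial_{0}^{-1})$ by a contraction argument. First I would conjugate (\ref{eq:evo-abstract}) with $\sqrt{M_{0}^{-1}}$. Setting $V:=\sqrt{M_{0}}\,U$, $B:=\sqrt{M_{0}^{-1}}A\sqrt{M_{0}^{-1}}$, $\tilde{M}_{1}(\partial_{0}^{-1}):=\sqrt{M_{0}^{-1}}M_{1}(\partial_{0}^{-1})\sqrt{M_{0}^{-1}}$ and $W':=\sqrt{M_{0}^{-1}}W_{0}$, and using $\partial_{0}M(\partial_{0}^{-1})=\partial_{0}M_{0}+M_{1}(\partial_{0}^{-1})$, the problem becomes
\[
(\partial_{0}+B)V+\tilde{M}_{1}(\partial_{0}^{-1})V=\sqrt{M_{0}^{-1}}\mathcal{J}+\delta\otimes W'.
\]
Since $A^{*}=-A$ and $\sqrt{M_{0}^{-1}}$ is selfadjoint, $B$ is skew-selfadjoint, so $-B$ generates a unitary group $(\e^{-tB})_{t\in\mathbb{R}}$ on $H$; moreover $\tilde{M}_{1}$ is again analytic and uniformly bounded on $B_{\mathbb{C}}(r,r)$, so $\tilde{M}_{1}(\partial_{0}^{-1})$ is a bounded operator on every $H_{\nu,0}(\mathbb{R},H)$.

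Second, I would build the fundamental solution and set up a fixed point. As $\Re\partial_{0}=\nu$ and $B^{*}=-B$, one has $\Re\langle(\partial_{0}+B)v\,|\,v\rangle_{\nu,0}=\nu\,|v|_{\nu,0}^{2}$; being the sum of the commuting normal operators $\partial_{0}$ and $B$, the operator $\partial_{0}+B$ is normal and boundedly invertible on $H_{\nu,0}(\mathbb{R},H)$ with $\|(\partial_{0}+B)^{-1}\|\le 1/\nu$. Its causal inverse ($\nu>0$) is convolution with the fundamental solution $\chi\,\e^{-\,\cdot\,B}$, where $\chi$ denotes the indicator of $[0,\infty)$; in particular $(\partial_{0}+B)^{-1}(\delta\otimes W')=\chi\,\e^{-\,\cdot\,B}W'$, and for causal $g\in H_{\nu,0}(\mathbb{R},H)$ one has $\big((\partial_{0}+B)^{-1}g\big)(t)=\int_{-\infty}^{t}\e^{-(t-s)B}g(s)\,ds$. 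Applying $(\partial_{0}+B)^{-1}$ to the symmetrised equation turns it into the fixed point equation
\[
V=V_{*}-(\partial_{0}+B)^{-1}\tilde{M}_{1}(\partial_{0}^{-1})V,\qquad V_{*}:=(\partial_{0}+B)^{-1}\sqrt{M_{0}^{-1}}\mathcal{J}+\chi\,\e^{-\,\cdot\,B}W'.
\]
Here $V_{*}\in H_{\nu,0}(\mathbb{R},H)$, since the first summand is the image of $\mathcal{J}\in H_{\nu,0}$ under a bounded operator and $\chi\,\e^{-\,\cdot\,B}W'$ has finite $H_{\nu,0}$-norm for $\nu>0$. Because $\|(\partial_{0}+B)^{-1}\tilde{M}_{1}(\partial_{0}^{-1})\|\le\|\tilde{M}_{1}\|_{\infty}/\nu<1$ for $\nu$ in the large-$\nu$ regime of Theorem \ref{SolutionTheory}, the map is a contraction on $H_{\nu,0}(\mathbb{R},H)$ and has a unique fixed point $V\in H_{\nu,0}(\mathbb{R},H)$ (which stays in the closed, invariant subspace of causal functions). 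Reading the fixed point equation backwards shows that $\sqrt{M_{0}^{-1}}V$ solves (\ref{eq:evo-abstract}); by the uniqueness in Theorem \ref{SolutionTheory} together with $H_{\nu,0}\hookrightarrow H_{\nu,-1}$ it coincides with the given solution, whence $U=\sqrt{M_{0}^{-1}}V\in H_{\nu,0}(\mathbb{R},H)$, which is (\ref{eq:cont}). (Alternatively, (\ref{eq:cont}) can be obtained by Plancherel from the Fourier--Laplace transform, using that $(\nu+\i(m+K))^{-1}$ applied to a fixed vector lies in $L^{2}$ uniformly in the shift $K$.)

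Third, for continuity and the initial value I would exploit the explicit structure of the fixed point. By causality $V$ and $\mathcal{J}$ vanish on $\mathbb{R}_{<0}$, so both $(\partial_{0}+B)^{-1}\sqrt{M_{0}^{-1}}\mathcal{J}$ and $(\partial_{0}+B)^{-1}\tilde{M}_{1}(\partial_{0}^{-1})V$ are of the form $(\partial_{0}+B)^{-1}g$ with $g\in H_{\nu,0}(\mathbb{R},H)$ causal. Via the Duhamel representation above, the strong continuity of the unitary group, and dominated convergence, each such term is a continuous $H$-valued function on $\mathbb{R}$ that vanishes for $t\le0$; in particular it is continuous at $0$ with value $0$. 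The only remaining contribution is $\chi\,\e^{-\,\cdot\,B}W'$, which is continuous on $\mathbb{R}\setminus\{0\}$ with right limit $\e^{0}W'=W'$. Hence $V$ is continuous on $\mathbb{R}\setminus\{0\}$ with $V(0+)=W'=\sqrt{M_{0}^{-1}}W_{0}$, so $U=\sqrt{M_{0}^{-1}}V$ is continuous on $\mathbb{R}\setminus\{0\}$ with $U(0+)=\sqrt{M_{0}^{-1}}\sqrt{M_{0}^{-1}}W_{0}=M_{0}^{-1}W_{0}$, which is (\ref{eq:ic}).

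The main obstacle is precisely the unboundedness of $A$: neither differentiating the $H_{\nu,0}$-solution in time nor any naive spatial bootstrap is available, because $AU$ need not lie in $H_{\nu,0}(\mathbb{R},H)$, so continuity cannot be read off from a time-Sobolev embedding of the equation. The device that resolves this is to route $A$ through the unitary group $\e^{-tB}$ after the $\sqrt{M_{0}^{-1}}$-symmetrisation, so that the discontinuity forced by $\delta\otimes W_{0}$ is carried entirely by the explicit fundamental solution $\chi\,\e^{-tB}W'$ while every other summand is a continuous mild solution. The technical point to handle with care is exactly this mild-solution continuity, namely that convolution with $\chi\,\e^{-\,\cdot\,B}$ genuinely maps causal $H_{\nu,0}$-data into continuous $H$-valued functions vanishing at $0$; alongside this one must track the $\nu$-threshold required both for the accretivity of $\partial_{0}+B$ and for the contraction constant $\|\tilde{M}_{1}\|_{\infty}/\nu<1$.
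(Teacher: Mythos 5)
Your proposal is correct and follows essentially the same route as the paper: conjugation by $\sqrt{M_{0}^{-1}}$ to reduce to $\partial_{0}+\sqrt{M_{0}^{-1}}A\sqrt{M_{0}^{-1}}$, a Neumann-series/contraction argument using $\left\Vert \left(\partial_{0}+\sqrt{M_{0}^{-1}}A\sqrt{M_{0}^{-1}}\right)^{-1}\right\Vert \leq\nu^{-1}$ to absorb $M_{1}\left(\partial_{0}^{-1}\right)$, the explicit representation $\chi_{_{\mathbb{R}_{\geq0}}}\exp\left(-\,\cdot\,\sqrt{M_{0}^{-1}}A\sqrt{M_{0}^{-1}}\right)\sqrt{M_{0}^{-1}}W_{0}$ for the delta term, and the Duhamel integral with a Cauchy--Schwarz estimate for the continuity of the remaining causal contribution. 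No gaps.
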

\begin{proof}
These stronger regularity statements will rely on the function calculus
for the skew-selfadjoint operator $\sqrt{M_{0}^{-1}}A\sqrt{M_{0}^{-1}}$
or -- depending on the point of view -- on one-parameter semi-group
arguments. Let $U\in H_{\nu,-1}\left(\mathbb{R},H\right)$ be the
solution of (\ref{eq:evo-abstract}). Noting that
\[
\left(\partial_{0}M_{0}+A\right)^{-1}=\sqrt{M_{0}^{-1}}\left(\partial_{0}+\sqrt{M_{0}^{-1}}A\sqrt{M_{0}^{-1}}\right)^{-1}\sqrt{M_{0}^{-1}}
\]
we have
\[
\left(\partial_{0}+\sqrt{M_{0}^{-1}}M_{1}\left(\partial_{0}^{-1}\right)\sqrt{M_{0}^{-1}}+\sqrt{M_{0}^{-1}}A\sqrt{M_{0}^{-1}}\right)\sqrt{M_{0}}U=\sqrt{M_{0}^{-1}}\mathcal{J}+\delta\otimes\sqrt{M_{0}^{-1}}W_{0}.
\]
From this we have
\begin{eqnarray*}
\left(1+\left(\partial_{0}+\sqrt{M_{0}^{-1}}A\sqrt{M_{0}^{-1}}\right)^{-1}M_{1}\left(\partial_{0}^{-1}\right)\right)\sqrt{M_{0}}U & = & \left(\partial_{0}+\sqrt{M_{0}^{-1}}A\sqrt{M_{0}^{-1}}\right)^{-1}\sqrt{M_{0}^{-1}}\mathcal{J}+\\
 &  & +\left(\partial_{0}+\sqrt{M_{0}^{-1}}A\sqrt{M_{0}^{-1}}\right)^{-1}\delta\otimes\sqrt{M_{0}^{-1}}W_{0}.
\end{eqnarray*}
We have 
\[
\left(\partial_{0}+\sqrt{M_{0}^{-1}}A\sqrt{M_{0}^{-1}}\right)^{-1}\delta\otimes\sqrt{M_{0}^{-1}}W_{0}=\left(t\mapsto\chi_{_{\mathbb{R}_{\geq0}}}\left(t\right)\:\exp\left(-t\sqrt{M_{0}^{-1}}A\sqrt{M_{0}^{-1}}\right)\sqrt{M_{0}^{-1}}W_{0}\right)
\]
by uniqueness of solution, which as we read off is in $H_{\nu,0}\left(\mathbb{R},H\right)$
and is a continuous function on $\mathbb{R}\setminus\left\{ 0\right\} $.
In particular, we also have
\begin{equation}
\left(\left(\partial_{0}+\sqrt{M_{0}^{-1}}A\sqrt{M_{0}^{-1}}\right)^{-1}\delta\otimes\sqrt{M_{0}^{-1}}W_{0}\right)\left(0+\right)=\sqrt{M_{0}^{-1}}W_{0}\label{eq:idata}
\end{equation}
in $H$. Moreover, for $\nu\in\mathbb{R}_{>0}$ sufficiently large
we have that $\left(\partial_{0}+\sqrt{M_{0}^{-1}}A\sqrt{M_{0}^{-1}}\right)^{-1}M_{1}\left(\partial_{0}^{-1}\right)$
is a contraction since
\[
\left|\left(\partial_{0}+\sqrt{M_{0}^{-1}}A\sqrt{M_{0}^{-1}}\right)^{-1}f\right|_{\nu,0,0}\leq\frac{1}{\nu}\left|f\right|_{\nu,0,0}.
\]
Therefore 
\[
U=\sqrt{M_{0}^{-1}}\left(1-Q\right)^{-1}\left(\partial_{0}+\sqrt{M_{0}^{-1}}A\sqrt{M_{0}^{-1}}\right)^{-1}\left(\sqrt{M_{0}^{-1}}\mathcal{J}+\delta\otimes\sqrt{M_{0}^{-1}}W_{0}\right)\in H_{\nu,0}\left(\mathbb{R},H\right),
\]
where $Q=-\left(\partial_{0}+\sqrt{M_{0}^{-1}}A\sqrt{M_{0}^{-1}}\right)^{-1}M_{1}\left(\partial_{0}^{-1}\right).$
We also have that 
\begin{eqnarray*}
 &  & \left(1+\left(\partial_{0}+\sqrt{M_{0}^{-1}}A\sqrt{M_{0}^{-1}}\right)^{-1}M_{1}\left(\partial_{0}^{-1}\right)\right)^{-1}=\\
 &  & =1-\left(\partial_{0}+\sqrt{M_{0}^{-1}}A\sqrt{M_{0}^{-1}}\right)^{-1}M_{1}\left(\partial_{0}^{-1}\right)\left(1+\left(\partial_{0}+\sqrt{M_{0}^{-1}}A\sqrt{M_{0}^{-1}}\right)^{-1}M_{1}\left(\partial_{0}^{-1}\right)\right)^{-1}
\end{eqnarray*}
and so
\begin{eqnarray}
U & = & \sqrt{M_{0}^{-1}}\left(\partial_{0}+\sqrt{M_{0}^{-1}}A\sqrt{M_{0}^{-1}}\right)^{-1}\delta\otimes\sqrt{M_{0}^{-1}}W_{0}+\label{eq:ireg}\\
 &  & +\sqrt{M_{0}^{-1}}\left(\partial_{0}+\sqrt{M_{0}^{-1}}A\sqrt{M_{0}^{-1}}\right)^{-1}F\nonumber 
\end{eqnarray}
for some $F\in H_{\nu,0}\left(\mathbb{R},H\right)$ with $F=0$ on
$\mathbb{R}_{<0}.$ Since 
\begin{eqnarray*}
\left|\int_{-\infty}^{t}\exp\left(-\left(t-s\right)\sqrt{M_{0}^{-1}}A\sqrt{M_{0}^{-1}}\right)F\left(s\right)\: ds\right| & \leq & \int_{-\infty}^{t}\left|F\left(s\right)\right|\: ds\\
 &  & =\int_{-\infty}^{t}\exp\left(\nu s\right)\,\left|F\left(s\right)\right|\,\exp\left(-\nu s\right)\: ds,\\
 & \leq & \frac{1}{\sqrt{2\nu}}\exp\left(\nu t\right)\:\sqrt{\int_{-\infty}^{t}\left|F\left(s\right)\right|^{2}\,\exp\left(-2\nu s\right)\: ds},
\end{eqnarray*}
existence of the integral $\int_{-\infty}^{t}\exp\left(-\left(t-s\right)\sqrt{M_{0}^{-1}}A\sqrt{M_{0}^{-1}}\right)F\left(s\right)\: ds$
and continuity of $t\mapsto\int_{-\infty}^{t}\exp\left(-\left(t-s\right)\sqrt{M_{0}^{-1}}A\sqrt{M_{0}^{-1}}\right)F\left(s\right)\: ds$
is clear. Since $F=0$ on $\mathbb{R}_{<0}$ we have in particular
that
\[
\int_{-\infty}^{0}\exp\left(-\left(t-s\right)\sqrt{M_{0}^{-1}}A\sqrt{M_{0}^{-1}}\right)F\left(s\right)\: ds=0.
\]
 Since the first term of (\ref{eq:ireg}) satisfies property (\ref{eq:idata})
we have indeed
\[
U\left(0+\right)=\sqrt{M_{0}^{-1}}\sqrt{M_{0}^{-1}}W_{0}=M_{0}^{-1}W_{0}.
\]
\end{proof}
\begin{rem}
In the case $A=0$ even stronger regularity follows. Indeed, the solution
is then such that
\begin{equation}
U-\chi_{_{\mathbb{R}_{\geq0}}}\otimes M_{0}^{-1}W_{0}\in H_{\nu,1}\left(\mathbb{R},H\right).\label{eq:ode-reg}
\end{equation}

\end{rem}

\section{Discussion of the Drude-Born-Fedorov Model}

We begin by analyzing the Drude-Born-Fedorov material relation ($\epsilon,\mu\in\mathbb{R}_{>0}$,~
$\eta\in\mathbb{R}\setminus\left\{ 0\right\} $) 
\[
D=\epsilon\left(E+\eta\:\curl E\right),\: B=\mu\left(H+\eta\:\curl H\right)
\]
in the light of the above general theory by substituting Maxwell's
equations back into the Drude-Born-Fedorov relation (ignoring possible
source terms) to obtain a modified material relation of the form 

\[
D=\epsilon\left(E\lyxmathsym{\textminus}\eta\:\partial_{0}B\right),\: B=\mu\left(H+\eta\:\partial_{0}D\right).
\]
This is a modified Condon model, \cite[Condon 1937]{Condon37}, compared
to which in the right-hand sides $B$ and $D$ are replaced by $H$
and $E$, respectively. 

Although similar in concept to the Condon model, where replacing a
convolution term by a two-term Taylor approximation leads to no reasonable
results, here this type of {}``approximation'' can actually be justified,
see \cite[Frantzeskakis, Ioannidis, Roach, Stratis,  Yannacopoulos (2003)]{1052.78002}.
With a slight reformulation we have Maxwell's equations 
\[
\partial_{0}\left(\begin{array}{c}
\frac{1}{\sqrt{\epsilon}}D\\
\frac{1}{\sqrt{\mu}}B
\end{array}\right)+\left(\begin{array}{cc}
0 & -\curl\\
\curl & 0
\end{array}\right)\left(\begin{array}{c}
\frac{1}{\sqrt{\mu}}E\\
\frac{1}{\sqrt{\epsilon}}H
\end{array}\right)=\left(\begin{array}{c}
-\frac{1}{\sqrt{\epsilon}}J\\
0
\end{array}\right)
\]
and the Drude-Born-Fedorov material relation assumes the formal shape

\[
\left(\begin{array}{cc}
1 & \eta\:\sqrt{\epsilon\mu}\partial_{0}\\
-\eta\:\sqrt{\epsilon\mu}\partial_{0} & 1
\end{array}\right)\left(\begin{array}{c}
\frac{1}{\sqrt{\epsilon}}D\\
\frac{1}{\sqrt{\mu}}B
\end{array}\right)=\left(\begin{array}{c}
\frac{1}{\sqrt{\mu}}E\\
\frac{1}{\sqrt{\epsilon}}H
\end{array}\right),
\]
 or rather
\begin{eqnarray*}
\left(\begin{array}{c}
\frac{1}{\sqrt{\epsilon}}D\\
\frac{1}{\sqrt{\mu}}B
\end{array}\right) & = & \left(\begin{array}{cc}
1 & \eta\:\sqrt{\epsilon\mu}\partial_{0}\\
-\eta\:\sqrt{\epsilon\mu}\partial_{0} & 1
\end{array}\right)^{-1}\left(\begin{array}{c}
\frac{1}{\sqrt{\mu}}E\\
\frac{1}{\sqrt{\epsilon}}H
\end{array}\right)\\
 & = & \left(1+\eta\:^{2}\epsilon\mu\partial_{0}^{2}\right)^{-1}\left(\begin{array}{cc}
1 & -\eta\:\sqrt{\epsilon\mu}\partial_{0}\\
\eta\:\sqrt{\epsilon\mu}\partial_{0} & 1
\end{array}\right)\left(\begin{array}{c}
\frac{1}{\sqrt{\mu}}E\\
\frac{1}{\sqrt{\epsilon}}H
\end{array}\right).
\end{eqnarray*}
In the language of the above theory
\begin{eqnarray*}
M\left(\partial_{0}^{-1}\right) & = & \left(1+\eta\:^{2}\epsilon\mu\partial_{0}^{2}\right)^{-1}\left(\begin{array}{cc}
1 & -\eta\:\sqrt{\epsilon\mu}\partial_{0}\\
\eta\:\sqrt{\epsilon\mu}\partial_{0} & 1
\end{array}\right)\\
 & = & \left(1+\left(\frac{1}{\eta\:\sqrt{\epsilon\mu}}\partial_{0}^{-1}\right)^{2}\right)^{-1}\left(\begin{array}{cc}
\left(\frac{1}{\eta\:\sqrt{\epsilon\mu}}\partial_{0}^{-1}\right)^{2} & -\left(\frac{1}{\eta\:\sqrt{\epsilon\mu}}\partial_{0}^{-1}\right)\\
\left(\frac{1}{\eta\:\sqrt{\epsilon\mu}}\partial_{0}^{-1}\right) & \left(\frac{1}{\eta\:\sqrt{\epsilon\mu}}\partial_{0}^{-1}\right)^{2}
\end{array}\right)\\
 & = & \left(\begin{array}{cc}
0 & 0\\
0 & 0
\end{array}\right)+\partial_{0}^{-1}\left(\begin{array}{cc}
0 & -\frac{1}{\eta\:\sqrt{\epsilon\mu}}\\
\frac{1}{\eta\:\sqrt{\epsilon\mu}} & 0
\end{array}\right)+\left(\frac{1}{\eta\:\sqrt{\epsilon\mu}}\partial_{0}^{-1}\right)^{2}\left(1+\left(\frac{1}{\eta\:\sqrt{\epsilon\mu}}\partial_{0}^{-1}\right)^{2}\right)^{-1}\\
 & = & \left(\begin{array}{cc}
0 & 0\\
0 & 0
\end{array}\right)+\partial_{0}^{-1}\left(\begin{array}{cc}
0 & -\frac{1}{\eta\:\sqrt{\epsilon\mu}}\\
\frac{1}{\eta\:\sqrt{\epsilon\mu}} & 0
\end{array}\right)+O\left(\partial_{0}^{-2}\right).
\end{eqnarray*}
Note
\[
\Re\left(\begin{array}{cc}
0 & -\frac{1}{\eta\:\sqrt{\epsilon\mu}}\\
\frac{1}{\eta\:\sqrt{\epsilon\mu}} & 0
\end{array}\right)=0
\]
which shows that the resulting equation is \emph{not} covered by the
above theory directly and actually is not a differential equation
in time at all:

\begin{eqnarray*}
 &  & \left(\frac{1}{\eta\:\sqrt{\epsilon\mu}}\partial_{0}^{-1}\left(\begin{array}{cc}
0 & -1\\
1 & 0
\end{array}\right)+\left(\frac{1}{\eta\:\sqrt{\epsilon\mu}}\partial_{0}^{-1}\right)^{2}\left(1+\left(\frac{1}{\eta\:\sqrt{\epsilon\mu}}\partial_{0}^{-1}\right)^{2}\right)^{-1}+\left(\begin{array}{cc}
0 & -\curl\\
\curl & 0
\end{array}\right)\right)\left(\begin{array}{c}
\frac{1}{\sqrt{\mu}}E\\
\frac{1}{\sqrt{\epsilon}}H
\end{array}\right)=\\
 &  & =\left(\begin{array}{c}
-\frac{1}{\sqrt{\epsilon}}J\\
\frac{1}{\sqrt{\mu}}\:0
\end{array}\right).
\end{eqnarray*}
Moreover, this whole consideration is assuming that we consider $\curl$
to be a self-adjoint realization in $L^{2}\left(\Omega\right)$, which
can only be achieved in very special cases, such as $\Omega=\mathbb{R}^{3}\setminus N$,
where $N$ is a set of capacity zero. (e.g. in $\Omega=\mathbb{R}^{3}$). 

However, for media occupying an arbitrary open subset $\Omega$ of
$\mathbb{R}^{3}$ we fortunately have a natural choice of boundary
condition, which turns $\curl$ with a corresponding choice of domain
into a selfadjoint operator. To formulate this condition properly
we need to introduce $\interior{\curl}$ as the closure in $L^{2}\left(\Omega\right)$
of $\curl$ restricted to $\interior{C}_{\infty}\left(\Omega\right)$
vector fields (we do \emph{not} indicate the number of components).
Then $\curl$ is properly defined as the adjoint of $\interior{\curl}$:
\begin{eqnarray*}
\curl & := & \left(\interior{\curl}\right)^{*}.
\end{eqnarray*}
Similarly, $\interior{\dive}$ is defined as the closure in $L^{2}\left(\Omega\right)$
of $\dive$ restricted to $\interior{C}_{\infty}\left(\Omega\right)$
vector fields. Then
\begin{eqnarray*}
\grad & := & -\left(\interior{\dive}\right)^{*}
\end{eqnarray*}
is the usual weak derivative in $L^{2}\left(\Omega\right)$, , see
\cite[$\pi $k 1998]{0926.35104} for the conceptual details. Containment
of a field $E$ in $D\left(\interior{\curl}\right)$ is the proper
weak generalization of the classical boundary condition {}``$n\times E=0\:\mbox{on }\partial\Omega$'',
whereas $E\in D\left(\interior{\dive}\right)$ generalized the classical
boundary condition {}``$n\cdot E=0\:\mbox{on }\partial\Omega$''.
It is important here to keep in mind that no regularity assumptions
on the boundary and no trace results are needed for these generalized
constructions.

A suitable boundary condition for the Drude-Born-Fedorov model can
now be stated in terms of the range of $\interior{\curl}$, $R\left(\interior{\curl}\right)$,
and of the domain of $\interior{\dive}$, $D\left(\interior{\dive}\right)$. 

We require 
\begin{equation}
\curl E\in\overline{R\left(\interior{\curl}\right)}\label{eq:bc0}
\end{equation}
or equivalently
\begin{equation}
\curl E\in D\left(\interior{\dive}\right)\label{eq:bca}
\end{equation}
and
\begin{equation}
\curl E\perp\mathcal{H}_{N},\label{eq:bcb}
\end{equation}
where $\mathcal{H}_{N}$ denotes the set of harmonic Neumann fields
\[
\mathcal{H}_{N}=\left\{ E\in D\left(\interior{\dive}\right)|\,\dive E=0,\curl E=0\right\} .
\]
Condition (\ref{eq:bca}) generalizes the classical boundary condition
{}``$n\cdot\curl E=0$ on $\partial\Omega$'' to non-smooth boundaries
and data%
\footnote{Note that assuming data in $D\left(\interior{\dive}\right)$ boundary
condition (\ref{eq:bca}) is induced by the requirement that $E\in D\left(\interior{\dive}\right)$.
Moreover, in the simply connected case $\mathcal{H}_{N}=\left\{ 0\right\} $
so that (\ref{eq:bc0}) reduces to (\ref{eq:bca}). These observations
are the link to the set-up utilized in \cite{1177.35042}.%
}. We shall denote the operator $\curl$ subject to boundary conditions
(\ref{eq:bca}), (\ref{eq:bcb}) by $\overset{\diamond}{\curl}$.
Under fairly general assumptions it can be shown that $\overset{\diamond}{\curl}$
is actually selfadjoint and even has -- apart from $0$ -- only discrete
spectrum $\sigma_{d}\left(\overset{\diamond}{\curl}\right)$, 
\begin{equation}
\sigma_{p}\left(\overset{\diamond}{\curl}\right)\setminus\left\{ 0\right\} =\sigma_{d}\left(\overset{\diamond}{\curl}\right).\label{eq:disc}
\end{equation}
The most general result, \cite[Filonov 2000]{Filonov00}, merely requires
$\Omega$ to be an open set with bounded measure to obtain the same
properties for $\overset{\diamond}{\curl}$. Exterior domains, i.e.
open sets with compact complement, also support selfadjointness of
$\overset{\diamond}{\curl}$, \cite[$\pi $k 1998]{0935.35029}, if
the boundary satisfies a local compact embedding property, see \cite{Picard2001}.
Indeed, although in the exterior domain case the spectrum of $\overset{\diamond}{\curl}$
is not purely point spectrum anymore, we still maintain (\ref{eq:disc}). 

In order to avoid technicalities and to immunize the results presented
here against possible future improvements, we make the crucial property
of selfadjointness our core assumption for the analysis of the Drude-Born-Fedorov
model:

\noindent \begin{center}
\begin{minipage}[t]{0.8\columnwidth}%
\textbf{Hypothesis} $\mathbf{\Omega}$: We assume that $\Omega$ is
an open subset of $\mathbb{R}^{3}$ such that 
\begin{itemize}
\item $\overset{\diamond}{\curl}$ is selfadjoint.\end{itemize}
\end{minipage}
\par\end{center}

Under this general assumption we shall re-inspect the Drude-Born-Fedorov
model from another perspective. We first note that 

\[
D=\epsilon\left(E+\eta\:\curl E\right)=\left(1+\eta\:\curl\right)\epsilon E,\: B=\mu\left(H+\eta\:\curl H\right)=\left(1+\eta\:\curl\right)\mu H.
\]
So, imposing our general \textbf{Hypothesis} $\mathbf{\Omega}$ the
material relation takes on the form 
\[
\left(\begin{array}{c}
D\\
B
\end{array}\right)=\left(1+\eta\:\overset{\diamond}{\curl}\right)\left(\begin{array}{cc}
\epsilon & 0\\
0 & \mu
\end{array}\right)\left(\begin{array}{c}
E\\
H
\end{array}\right).
\]
The initial value problem for Maxwell's equation now reads formally
\[
\left(1+\eta\:\mathbf{\overset{\diamond}{\curl}}\right)\partial_{0}\left(\begin{array}{cc}
\epsilon & 0\\
0 & \mu
\end{array}\right)\left(\begin{array}{c}
E\\
H
\end{array}\right)+\left(\begin{array}{cc}
0 & -1\\
1 & 0
\end{array}\right)\overset{\diamond}{\curl}\left(\begin{array}{c}
E\\
H
\end{array}\right)=\mathcal{J}+\delta\otimes W_{0}.
\]
Now let $P_{\eta}$ denote the orthogonal projector onto $\overline{R\left(1+\eta\:\mathbf{\overset{\diamond}{\curl}}\right)}$.
Consequently, $\left(1-P_{\eta}\right)$ is the orthogonal projector
onto $N\left(1+\eta\:\mathbf{\overset{\diamond}{\curl}}\right)$ and
so
\begin{eqnarray}
\left(1-P_{\eta}\right)\overset{\diamond}{\curl} & = & \eta^{-1}\left(1-P_{\eta}\right)\left(\eta\:\overset{\diamond}{\curl}+1-1\right)\nonumber \\
 & = & -\eta^{-1}\left(1-P_{\eta}\right)\label{eq:p}
\end{eqnarray}
\begin{eqnarray*}
\overset{\diamond}{\curl}\left(1-P_{\eta}\right) & = & \eta^{-1}\left(\eta\overset{\diamond}{\curl}+1-1\right)\left(1-P_{\eta}\right)\\
 & = & \eta^{-1}\left(\eta\overset{\diamond}{\curl}+1\right)\left(1-P_{\eta}\right)-\eta^{-1}\left(1-P_{\eta}\right)\\
 & = & -\eta^{-1}\left(1-P_{\eta}\right).
\end{eqnarray*}
Thus, $P_{\eta}$ and $\overset{\diamond}{\curl}$ commute and we
have 
\begin{eqnarray*}
\left(1+\eta\:\mathbf{\overset{\diamond}{\curl}}\right)\partial_{0}P_{\eta}\left(\begin{array}{cc}
\epsilon & 0\\
0 & \mu
\end{array}\right)\left(\begin{array}{c}
E\\
H
\end{array}\right)+\\
+\left(\begin{array}{cc}
0 & -1\\
1 & 0
\end{array}\right)P_{\eta}\overset{\diamond}{\curl}\left(\begin{array}{c}
E\\
H
\end{array}\right) & = & P_{\eta}\mathcal{J}+\delta\otimes P_{\eta}W_{0}\\
\left(1+\eta\:\mathbf{\overset{\diamond}{\curl}}\right)\partial_{0}P_{\eta}\left(\begin{array}{cc}
\epsilon & 0\\
0 & \mu
\end{array}\right)P_{\eta}\left(\begin{array}{c}
E\\
H
\end{array}\right)+\\
+\left(1+\eta\:\mathbf{\overset{\diamond}{\curl}}\right)\partial_{0}P_{\eta}\left(\begin{array}{cc}
\epsilon & 0\\
0 & \mu
\end{array}\right)\left(1-P_{\eta}\right)\left(\begin{array}{c}
E\\
H
\end{array}\right)+\\
+\left(\begin{array}{cc}
0 & -1\\
1 & 0
\end{array}\right)P_{\eta}\overset{\diamond}{\curl}\left(\begin{array}{c}
E\\
H
\end{array}\right) & = & P_{\eta}\mathcal{J}+\delta\otimes P_{\eta}W_{0}\\
\left(\begin{array}{cc}
0 & -1\\
1 & 0
\end{array}\right)\left(1-P_{\eta}\right)\overset{\diamond}{\curl}\left(\begin{array}{c}
E\\
H
\end{array}\right) & = & \left(1-P_{\eta}\right)\mathcal{J}+\delta\otimes\left(1-P_{\eta}\right)W_{0}\\
-\eta^{-1}\left(\begin{array}{cc}
0 & -1\\
1 & 0
\end{array}\right)\left(1-P_{\eta}\right)\left(\begin{array}{c}
E\\
H
\end{array}\right) & = & 0
\end{eqnarray*}
and so
\begin{eqnarray*}
\left(1+\eta\:\pi_{\eta}\overset{\diamond}{\curl}\pi_{\eta}^{*}\right)\partial_{0}\pi_{\eta}\left(\begin{array}{cc}
\epsilon & 0\\
0 & \mu
\end{array}\right)\pi_{\eta}^{*}\pi_{\eta}\left(\begin{array}{c}
E\\
H
\end{array}\right)+\\
+\left(\begin{array}{cc}
0 & -1\\
1 & 0
\end{array}\right)\pi_{\eta}\overset{\diamond}{\curl}\pi_{\eta}^{*}\pi_{\eta}\left(\begin{array}{c}
E\\
H
\end{array}\right) & = & \pi_{\eta}\mathcal{J}+\delta\otimes\pi_{\eta}W_{0}
\end{eqnarray*}
 
\begin{eqnarray*}
\partial_{0}\left(\pi_{\eta}\left(\begin{array}{cc}
\epsilon & 0\\
0 & \mu
\end{array}\right)\pi_{\eta}^{*}\right)\pi_{\eta}\left(\begin{array}{c}
E\\
H
\end{array}\right)+\\
+\left(1+\eta\:\pi_{\eta}\overset{\diamond}{\curl}\pi_{\eta}^{*}\right)^{-1}\pi_{\eta}\overset{\diamond}{\curl}\pi_{\eta}^{*}\left(\begin{array}{cc}
0 & -1\\
1 & 0
\end{array}\right)\pi_{\eta}\left(\begin{array}{c}
E\\
H
\end{array}\right) & = & \pi_{\eta}\mathcal{J}+\delta\otimes\pi_{\eta}W_{0}
\end{eqnarray*}
It is
\[
P_{\eta}\overset{\diamond}{\curl}\subseteq\overset{\diamond}{\curl}P_{\eta}
\]
or
\[
\left(\partial_{0}\left(\begin{array}{cc}
\epsilon & 0\\
0 & \mu
\end{array}\right)+\left(1+\eta\:\overset{\diamond}{\curl}\right)^{-1}\overset{\diamond}{\curl}\left(\begin{array}{cc}
0 & -1\\
1 & 0
\end{array}\right)\left(\begin{array}{c}
E\\
H
\end{array}\right)\right)=\left(1+\eta\:\overset{\diamond}{\curl}\right)^{-1}(\mathcal{J}+\delta\otimes W_{0}).
\]

Discussing this evolution equation we can finally make rigorous sense
of our above arguments. 

The solution theory of the original Drude-Born-Fedorov model can now
be formulated within the framework of the Sobolev lattice

\[
\left(H_{\nu,k}\left(\mathbb{R},\: H_{s}\left(\overset{\diamond}{\curl}+\i\right)\right)\right)_{k,s\in\mathbb{Z}}.
\]

\begin{thm}
\label{DBF0}Under \textbf{Hypothesis} $\mathbf{\Omega}$ and assuming
that 
\begin{equation}
-\frac{1}{\eta}\in\left(\rho\left(\overset{\diamond}{\curl}\right)\cap\mathbb{R}\right)\cup\left\{ \lambda|\,\lambda\:\mbox{ is isolated in }\sigma\left(\overset{\diamond}{\curl}\right)\right\} \label{eq:spec}
\end{equation}
we have that for 
\begin{equation}
\mathcal{J}\in H_{\nu,0}\left(\mathbb{R},R\left(1+\eta\:\overset{\diamond}{\curl}\right)\right),\:\mathcal{J}=0\mbox{ on }\mathbb{R}_{<0},\label{eq:range1}
\end{equation}
 and 
\begin{equation}
W_{0}\in R\left(1+\eta\:\overset{\diamond}{\curl}\right)\label{eq:range2}
\end{equation}
 the Drude-Born-Fedorov model 
\begin{equation}
\partial_{0}\left(\begin{array}{c}
D\\
B
\end{array}\right)+\left(\begin{array}{cc}
0 & -\overset{\diamond}{\curl}\\
\overset{\diamond}{\curl} & 0
\end{array}\right)\left(\begin{array}{c}
E\\
H
\end{array}\right)=\mathcal{J}+\delta\otimes W_{0}\label{eq:DBF-evo}
\end{equation}
with
\begin{equation}
\left(\begin{array}{c}
D\\
B
\end{array}\right)=\left(1+\eta\:\overset{\diamond}{\curl}\right)\left(\begin{array}{cc}
\epsilon & 0\\
0 & \mu
\end{array}\right)\left(\begin{array}{c}
E\\
H
\end{array}\right)\label{eq:material-DBF}
\end{equation}
 has a unique solution $\left(\begin{array}{c}
E\\
H
\end{array}\right)\in H_{\nu,0}\left(\mathbb{R},L^{2}\left(\Omega\right)\right)$ satisfying
\[
\left(\begin{array}{c}
D\\
B
\end{array}\right)\left(0+\right)=W_{0}
\]
in $H_{-1}\left(\overset{\diamond}{\curl}+\i\right)$. The solution
depends continuously and causal on the data.\end{thm}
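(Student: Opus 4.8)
The plan is to turn the formally degenerate system (\ref{eq:DBF-evo})--(\ref{eq:material-DBF}) into an equation of the standard type covered by Theorem \ref{SolutionTheory} by compressing every operator to the closed subspace $H_{r}:=\overline{R\left(1+\eta\:\overset{\diamond}{\curl}\right)}$, which is precisely the manipulation already carried out in the computation preceding the statement. Selfadjointness of $\overset{\diamond}{\curl}$ (\textbf{Hypothesis} $\mathbf{\Omega}$) gives the orthogonal decomposition $L^{2}\left(\Omega\right)=N\left(1+\eta\:\overset{\diamond}{\curl}\right)\oplus H_{r}$, and the relations (\ref{eq:p}) show that the orthogonal projector $P_{\eta}$ onto $H_{r}$ commutes with $\overset{\diamond}{\curl}$. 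Writing $\pi_{\eta}$ for the corestriction of $P_{\eta}$ onto $H_{r}$, I would split the problem into its $N\left(1+\eta\:\overset{\diamond}{\curl}\right)$-part and its $H_{r}$-part.

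First I would dispose of the kernel part. Applying $\left(1-P_{\eta}\right)$ to (\ref{eq:DBF-evo}) and inserting (\ref{eq:p}), the data assumptions (\ref{eq:range1}) and (\ref{eq:range2}) make the right-hand side vanish on $N\left(1+\eta\:\overset{\diamond}{\curl}\right)$, while the differentiated material term is annihilated as well, leaving only the algebraic identity $-\eta^{-1}\left(\begin{smallmatrix}0 & -1\\1 & 0\end{smallmatrix}\right)\left(1-P_{\eta}\right)\left(E,H\right)=0$. Since $\left(\begin{smallmatrix}0 & -1\\1 & 0\end{smallmatrix}\right)$ is invertible this forces $\left(1-P_{\eta}\right)\left(E,H\right)=0$, so any solution is automatically $H_{r}$-valued. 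Hence the reduction is an \emph{equivalence}: a solution of the original problem corresponds bijectively to a solution of the compressed problem on $H_{r}$.

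Next I would check that the compressed equation derived above, namely $\partial_{0}M_{0}V+AV=\pi_{\eta}\left(1+\eta\:\overset{\diamond}{\curl}\right)^{-1}\left(\mathcal{J}+\delta\otimes W_{0}\right)$ with $V=\pi_{\eta}\left(E,H\right)$, $M_{0}=\mathrm{diag}\left(\epsilon,\mu\right)$ and $A:=\left(1+\eta\:\overset{\diamond}{\curl}\right)^{-1}\overset{\diamond}{\curl}\left(\begin{smallmatrix}0 & -1\\1 & 0\end{smallmatrix}\right)$, satisfies the hypotheses of Theorem \ref{SolutionTheory} with $M\left(\partial_{0}^{-1}\right)=M_{0}$ (so the $M_{1}$-term is absent). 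The multiplier $M_{0}$ is bounded, selfadjoint and strictly positive, and since the scalars $\epsilon,\mu$ commute with $\overset{\diamond}{\curl}$ it leaves $H_{r}$ invariant. For $A$ I would write $A=g\left(\overset{\diamond}{\curl}\right)\left(\begin{smallmatrix}0 & -1\\1 & 0\end{smallmatrix}\right)$ with $g\left(\lambda\right)=\lambda/\left(1+\eta\lambda\right)$; as $g\left(\overset{\diamond}{\curl}\right)$ acts identically on both field components it commutes with the skew-selfadjoint matrix $\left(\begin{smallmatrix}0 & -1\\1 & 0\end{smallmatrix}\right)$, so $A^{*}=-A$ as soon as $g\left(\overset{\diamond}{\curl}\right)$ is bounded and selfadjoint on $H_{r}$. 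This yields a \emph{bounded} generator, matching the claim that the model reduces to a simple evolution equation.

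The hard part, and the only place where assumption (\ref{eq:spec}) is genuinely used, is exactly this boundedness. Since $g\left(\lambda\right)\to1/\eta$ as $\lambda\to\pm\infty$, the function $g$ is bounded on the whole real line away from its pole $\lambda=-1/\eta$, and I would rule out that pole by (\ref{eq:spec}): if $-1/\eta\in\rho\left(\overset{\diamond}{\curl}\right)$ then $P_{\eta}=1$ and $g$ is bounded on $\sigma\left(\overset{\diamond}{\curl}\right)$, while if $-1/\eta$ is isolated in $\sigma\left(\overset{\diamond}{\curl}\right)$ then by selfadjointness it is an eigenvalue whose eigenspace is $N\left(1+\eta\:\overset{\diamond}{\curl}\right)$, so on the complementary space $H_{r}$ the spectrum is bounded away from $-1/\eta$; either way $g\left(\overset{\diamond}{\curl}\right)$ is bounded and selfadjoint on $H_{r}$. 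Theorem \ref{SolutionTheory} then gives a unique $V\in H_{\nu,0}\left(\mathbb{R},H_{r}\right)$, causal and continuously dependent on the data, which through the embedding $H_{r}\hookrightarrow L^{2}\left(\Omega\right)$ is the asserted $\left(E,H\right)$. Finally, Theorem \ref{RegRes} gives $V\left(0+\right)=M_{0}^{-1}\left(1+\eta\:\overset{\diamond}{\curl}\right)^{-1}W_{0}$, and since $\left(1+\eta\:\overset{\diamond}{\curl}\right)$ is continuous from $L^{2}\left(\Omega\right)=H_{0}\left(\overset{\diamond}{\curl}+\i\right)$ into $H_{-1}\left(\overset{\diamond}{\curl}+\i\right)$, evaluating $\left(1+\eta\:\overset{\diamond}{\curl}\right)M_{0}\left(E,H\right)$ at $0+$ produces $\left(1+\eta\:\overset{\diamond}{\curl}\right)\left(1+\eta\:\overset{\diamond}{\curl}\right)^{-1}W_{0}=W_{0}$ in $H_{-1}\left(\overset{\diamond}{\curl}+\i\right)$, using $W_{0}\in R\left(1+\eta\:\overset{\diamond}{\curl}\right)$; causality and continuous dependence are inherited verbatim from Theorem \ref{SolutionTheory}.
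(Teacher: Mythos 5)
Your proposal is correct and follows essentially the same route as the paper: compress the system to $\overline{R\left(1+\eta\:\overset{\diamond}{\curl}\right)}$, use the spectral assumption to make $\left(1+\eta\:\overset{\diamond}{\curl}\right)^{-1}\overset{\diamond}{\curl}$ a bounded operator there, invoke the abstract solution and regularity theorems, and kill the kernel component via $\left(1-P_{\eta}\right)\overset{\diamond}{\curl}=-\eta^{-1}\left(1-P_{\eta}\right)$. The only (harmless) difference is bookkeeping: you place the resulting bounded skew-selfadjoint operator in the role of $A$, whereas the paper absorbs it into the material law as $\partial_{0}^{-1}M_{1}\left(\partial_{0}^{-1}\right)$ with $A=0$ so as to quote the stronger $A=0$ regularity remark for the initial condition, which you recover instead directly from Theorem \ref{RegRes}.
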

\begin{proof}
We have that 
\begin{eqnarray*}
\overset{\diamond}{\curl}_{\eta}:D\left(\overset{\diamond}{\curl}\right)\cap\overline{R\left(1+\eta\:\overset{\diamond}{\curl}\right)}\subseteq\overline{R\left(1+\eta\:\overset{\diamond}{\curl}\right)} & \to & R\left(1+\eta\:\overset{\diamond}{\curl}\right)\subseteq\overline{R\left(1+\eta\:\overset{\diamond}{\curl}\right)}\\
\phi & \mapsto & \overset{\diamond}{\curl}\phi
\end{eqnarray*}
is a selfadjoint operator with a well-defined continuous inverse
\begin{eqnarray*}
\left(1+\eta\:\overset{\diamond}{\curl}_{\eta}\right)^{-1}:\overline{R\left(1+\eta\:\overset{\diamond}{\curl}\right)} & \to & R\left(1+\eta\:\overset{\diamond}{\curl}\right)\subseteq\overline{R\left(1+\eta\:\overset{\diamond}{\curl}\right)}.
\end{eqnarray*}
It is

\[
\pi_{\eta}\overset{\diamond}{\curl}\pi_{\eta}^{*}=\overset{\diamond}{\curl}_{\eta}
\]
where $\pi_{\eta}$ denotes the canonical projection onto the subspace
$\overline{R\left(1+\eta\:\overset{\diamond}{\curl}\right)}$ of $L^{2}\left(\Omega\right)$.
Then $\pi_{\eta}^{*}$ is the identity embedding of $\overline{R\left(1+\eta\:\overset{\diamond}{\curl}\right)}$
into $L^{2}\left(\Omega\right)$.

We consider initially the solution of the evolution equation
\begin{eqnarray}
 &  & \left(\partial_{0}\pi_{\eta}\left(\begin{array}{cc}
\epsilon & 0\\
0 & \mu
\end{array}\right)\pi_{\eta}^{*}+\left(1+\eta\:\overset{\diamond}{\curl}_{\eta}\right)^{-1}\overset{\diamond}{\curl}_{\eta}\left(\begin{array}{cc}
0 & -1\\
1 & 0
\end{array}\right)\right)\left(\begin{array}{c}
e\\
h
\end{array}\right)=\label{eq:ode}\\
 &  & =\left(1+\eta\:\overset{\diamond}{\curl}_{\eta}\right)^{-1}\mathcal{J}+\nonumber \\
 &  & +\delta\otimes\left(1+\eta\:\overset{\diamond}{\curl}_{\eta}\right)^{-1}W_{0}\nonumber 
\end{eqnarray}
and observe that the resulting system is now an evolutionary system
in the sense of Section \ref{sec:Space-Time-Evolution-Equations}
with
\begin{eqnarray}
M\left(\partial_{0}^{-1}\right) & = & \pi_{\eta}\left(\begin{array}{cc}
\epsilon & 0\\
0 & \mu
\end{array}\right)\pi_{\eta}^{*}+\partial_{0}^{-1}C,\label{eq:mod-mat}
\end{eqnarray}
$A=0$ and 
\begin{eqnarray*}
C & = & \left(1+\eta\:\overset{\diamond}{\curl}_{\eta}\right)^{-1}\overset{\diamond}{\curl}_{\eta}\left(\begin{array}{cc}
0 & -1\\
1 & 0
\end{array}\right)\\
 & = & \left(\begin{array}{cc}
0 & -\left(1+\eta\:\overset{\diamond}{\curl}_{\eta}\right)^{-1}\overset{\diamond}{\curl}_{\eta}\\
\left(1+\eta\:\overset{\diamond}{\curl}_{\eta}\right)^{-1}\overset{\diamond}{\curl}_{\eta} & 0
\end{array}\right)
\end{eqnarray*}
continuous on $\overline{R\left(1+\eta\:\overset{\diamond}{\curl}\right)}$.
\begin{eqnarray*}
\left(1+\eta\:\overset{\diamond}{\curl}_{\eta}\right)^{-1}\overset{\diamond}{\curl}_{\eta} & = & \eta^{-1}\left(1+\eta\:\overset{\diamond}{\curl}_{\eta}\right)^{-1}\left(\eta\overset{\diamond}{\curl}_{\eta}+1\right)-\eta^{-1}\left(1+\eta\:\overset{\diamond}{\curl}_{\eta}\right)^{-1}\\
 & = & \eta^{-1}-\eta^{-1}\left(1+\eta\:\overset{\diamond}{\curl}_{\eta}\right)^{-1}
\end{eqnarray*}
From the abstract theory we now find existence, uniqueness of a solution
in the space $H_{\nu,0}\left(\mathbb{R},\overline{R\left(1+\eta\:\overset{\diamond}{\curl}\right)}\right)$
and continuous (causal) dependence on the right-hand side. Moreover, 

\[
\pi_{\eta}\left(\begin{array}{cc}
\epsilon & 0\\
0 & \mu
\end{array}\right)\pi_{\eta}^{*}\left(\begin{array}{c}
e\\
h
\end{array}\right)-\chi_{_{\mathbb{R}_{>0}}}\otimes\left(1+\eta\:\overset{\diamond}{\curl}_{\eta}\right)^{-1}W_{0}\in H_{\nu,1}\left(\mathbb{R},\overline{R\left(1+\eta\:\overset{\diamond}{\curl}\right)}\right).
\]

With 
\begin{eqnarray*}
\left(\begin{array}{c}
E\\
H
\end{array}\right) & := & \pi_{\eta}^{*}\left(\begin{array}{c}
e\\
h
\end{array}\right)=\pi_{\eta}^{*}\pi_{\eta}\pi_{\eta}^{*}\left(\begin{array}{c}
e\\
h
\end{array}\right)
\end{eqnarray*}
and using that $P_{\eta}=\pi_{\eta}^{*}\pi_{\eta}$ commutes with
$\left(\begin{array}{cc}
\epsilon & 0\\
0 & \mu
\end{array}\right)$ we get 
\[
\left(\begin{array}{cc}
\epsilon & 0\\
0 & \mu
\end{array}\right)\left(\begin{array}{c}
E\\
H
\end{array}\right)-\chi_{_{\mathbb{R}_{>0}}}\otimes\pi_{\eta}^{*}\left(1+\eta\:\overset{\diamond}{\curl}_{\eta}\right)^{-1}W_{0}\in H_{\nu,1}\left(\mathbb{R},L^{2}\left(\Omega\right)\right).
\]

Applying $\left(1+\eta\:\overset{\diamond}{\curl}\right)$ now yields
\begin{eqnarray*}
 &  & \left(1+\eta\:\overset{\diamond}{\curl}\right)\left(\begin{array}{cc}
\epsilon & 0\\
0 & \mu
\end{array}\right)\left(\begin{array}{c}
E\\
H
\end{array}\right)+\\
 &  & -\chi_{_{\mathbb{R}_{>0}}}\otimes\left(1+\eta\:\overset{\diamond}{\curl}\right)\pi_{\eta}^{*}\left(1+\eta\:\overset{\diamond}{\curl}_{\eta}\right)^{-1}W_{0}\in H_{\nu,1}\left(\mathbb{R},H_{-1}\left(\overset{\diamond}{\curl}+\i\right)\right)
\end{eqnarray*}
Clearly, 
\begin{eqnarray*}
\left(1+\eta\:\overset{\diamond}{\curl}\right)\pi_{\eta}^{*}\left(1+\eta\:\overset{\diamond}{\curl}_{\eta}\right)^{-1}W_{0} & = & \pi_{\eta}\left(1+\eta\:\overset{\diamond}{\curl}\right)\pi_{\eta}^{*}\left(1+\eta\:\overset{\diamond}{\curl}_{\eta}\right)^{-1}W_{0}\\
 & = & \left(1+\eta\:\overset{\diamond}{\curl}_{\eta}\right)\left(1+\eta\:\overset{\diamond}{\curl}_{\eta}\right)^{-1}W_{0}\\
 & = & W_{0}
\end{eqnarray*}
and so
\[
\left(1+\eta\:\overset{\diamond}{\curl}\right)\left(\begin{array}{cc}
\epsilon & 0\\
0 & \mu
\end{array}\right)\left(\begin{array}{c}
E\\
H
\end{array}\right)-\chi_{_{\mathbb{R}_{>0}}}\otimes W_{0}\in H_{\nu,1}\left(\mathbb{R},H_{-1}\left(\overset{\diamond}{\curl}+\i\right)\right).
\]
The latter shows by causality and a temporal Sobolev embedding argument
that 
\[
t\mapsto\left(\left(1+\eta\:\overset{\diamond}{\curl}\right)\left(\begin{array}{cc}
\epsilon & 0\\
0 & \mu
\end{array}\right)\left(\begin{array}{c}
E\\
H
\end{array}\right)-\chi_{_{\mathbb{R}_{>0}}}\otimes W_{0}\right)\left(t\right)
\]
 is continuous on $\mathbb{R}$ and in particular 
\[
\left(\left(1+\eta\:\overset{\diamond}{\curl}\right)\left(\begin{array}{cc}
\epsilon & 0\\
0 & \mu
\end{array}\right)\left(\begin{array}{c}
E\\
H
\end{array}\right)-\chi_{_{\mathbb{R}_{>0}}}\otimes W_{0}\right)\left(0\right)=0.
\]
The latter implies
\[
\left(\begin{array}{c}
D\\
B
\end{array}\right)\left(0+\right)=W_{0}.
\]
Applying $\left(1+\eta\:\overset{\diamond}{\curl}\right)\pi_{\eta}^{*}$
to equation (\ref{eq:ode}) yields similarly 
\[
\partial_{0}\left(\begin{array}{c}
D\\
B
\end{array}\right)+\overset{\diamond}{\curl}\left(\begin{array}{cc}
0 & -1\\
1 & 0
\end{array}\right)\left(\begin{array}{c}
E\\
H
\end{array}\right)=\mathcal{J}+\delta\otimes W_{0},
\]
which is the equation of the Drude-Born-Fedorov model. Note that this
equation now holds in $H_{\nu,-1}(\mathbb{R},H_{-1}(\overset{\diamond}{\curl}+\i)).$
Causality and continuity estimates follow from the general theory
applied to (\ref{eq:ode}). 

Conversely, if $\left(\begin{array}{c}
E\\
H
\end{array}\right)$ solves (\ref{eq:DBF-evo}) with (\ref{eq:material-DBF}) with $\mathcal{J}=0$
and $W_{0}=0$, then 
\begin{equation}
\left(\partial_{0}\left(1+\eta\:\overset{\diamond}{\curl}\right)\left(\begin{array}{cc}
\epsilon & 0\\
0 & \mu
\end{array}\right)+\overset{\diamond}{\curl}\left(\begin{array}{cc}
0 & -1\\
1 & 0
\end{array}\right)\right)\left(\begin{array}{c}
E\\
H
\end{array}\right)=0\label{eq:reg-0-DBF}
\end{equation}
and so

\begin{equation}
\left(1+\eta\:\overset{\diamond}{\curl}\right)\left(\partial_{0}\left(\begin{array}{cc}
\epsilon & 0\\
0 & \mu
\end{array}\right)+\eta^{-1}\left(\begin{array}{cc}
0 & -1\\
1 & 0
\end{array}\right)\right)\left(\begin{array}{c}
E\\
H
\end{array}\right)=\eta^{-1}\left(\begin{array}{cc}
0 & -1\\
1 & 0
\end{array}\right)\left(\begin{array}{c}
E\\
H
\end{array}\right).\label{eq:uni}
\end{equation}
Thus, we have that
\[
\eta^{-1}\left(\begin{array}{cc}
0 & -1\\
1 & 0
\end{array}\right)\left(\begin{array}{c}
E\\
H
\end{array}\right)\in R\left(1+\eta\:\overset{\diamond}{\curl}\right).
\]
This observation implies
\[
\left(\partial_{0}P_{\eta}\left(\begin{array}{cc}
\epsilon & 0\\
0 & \mu
\end{array}\right)+P_{\eta}\eta^{-1}\left(\begin{array}{cc}
0 & -1\\
1 & 0
\end{array}\right)\right)\left(\begin{array}{c}
E\\
H
\end{array}\right)=\left(1+\eta\:\overset{\diamond}{\curl}_{\eta}\right)^{-1}\eta^{-1}\left(\begin{array}{cc}
0 & -1\\
1 & 0
\end{array}\right)\left(\begin{array}{c}
E\\
H
\end{array}\right)
\]
or
\[
\left(\partial_{0}P_{\eta}\left(\begin{array}{cc}
\epsilon & 0\\
0 & \mu
\end{array}\right)+\overset{\diamond}{\curl}_{\eta}\left(1+\eta\:\overset{\diamond}{\curl}_{\eta}\right)^{-1}\left(\begin{array}{cc}
0 & -1\\
1 & 0
\end{array}\right)\right)\left(\begin{array}{c}
E\\
H
\end{array}\right)=0.
\]
This implies 
\begin{eqnarray*}
0 & = & \Re\left\langle P_{\eta}\left(\begin{array}{c}
E\\
H
\end{array}\right)\Big|\left(\partial_{0}\left(\begin{array}{cc}
\epsilon & 0\\
0 & \mu
\end{array}\right)+\overset{\diamond}{\curl}_{\eta}\left(1+\eta\:\overset{\diamond}{\curl}_{\eta}\right)^{-1}\left(\begin{array}{cc}
0 & -1\\
1 & 0
\end{array}\right)\right)P_{\eta}\left(\begin{array}{c}
E\\
H
\end{array}\right)\right\rangle _{\nu,0,0}\\
 & = & \Re\left\langle P_{\eta}\left(\begin{array}{c}
E\\
H
\end{array}\right)\Big|\partial_{0}\left(\begin{array}{cc}
\epsilon & 0\\
0 & \mu
\end{array}\right)P_{\eta}\left(\begin{array}{c}
E\\
H
\end{array}\right)\right\rangle _{\nu,0,0}\\
 & = & \nu\left\langle P_{\eta}\left(\begin{array}{c}
E\\
H
\end{array}\right)\Big|\left(\begin{array}{cc}
\epsilon & 0\\
0 & \mu
\end{array}\right)P_{\eta}\left(\begin{array}{c}
E\\
H
\end{array}\right)\right\rangle _{\nu,0,0}
\end{eqnarray*}
and we see
\[
P_{\eta}\left(\begin{array}{c}
E\\
H
\end{array}\right)=0.
\]
On the other hand, we read off from (\ref{eq:uni}) that 
\begin{eqnarray*}
0 & = & \left(1-P_{\eta}\right)\left(\begin{array}{cc}
0 & -1\\
1 & 0
\end{array}\right)\left(\begin{array}{c}
E\\
H
\end{array}\right)\\
 & = & \left(\begin{array}{cc}
0 & -1\\
1 & 0
\end{array}\right)\left(1-P_{\eta}\right)\left(\begin{array}{c}
E\\
H
\end{array}\right)
\end{eqnarray*}
implying
\[
\left(1-P_{\eta}\right)\left(\begin{array}{c}
E\\
H
\end{array}\right)=0.
\]
This shows $\left(\begin{array}{c}
E\\
H
\end{array}\right)=0$, i.e. uniqueness.\end{proof}
\begin{rem}
\label{rem-reso}Since the known results on the spectrum of $\overset{\diamond}{\curl}$
show compact resolvent in the ortho-complement of the null space of
$\overset{\diamond}{\curl}$, we also have $\overline{R\left(1+\eta\:\overset{\diamond}{\curl}\right)}=R\left(1+\eta\:\overset{\diamond}{\curl}\right)$
in these cases. If we have $-\frac{1}{\eta}\in\rho\left(\overset{\diamond}{\curl}\right)$
the result becomes particularly simple. In this case we always have
$\overline{R\left(1+\eta\:\overset{\diamond}{\curl}\right)}=R\left(1+\eta\:\overset{\diamond}{\curl}\right)=L^{2}\left(\Omega\right)$,
which eliminates the possibly undesirable range conditions (\ref{eq:range1}),
(\ref{eq:range2}).

Note that the assumption of Theorem \ref{DBF0} may never hold (making
the claim of the theorem trivially correct), as e.g. in the exterior
domain case. However, for example in the case of $\Omega$ having
bounded measure, we have -- according to \cite{Filonov00} -- pure
point spectrum with no accumulation points and so the assumption of
Theorem \ref{DBF0} is non-trivial and says simply $-\frac{1}{\eta}\in\mathbb{R}$
or 
\[
\eta\in\mathbb{R}\setminus\left\{ 0\right\} .
\]
In the case $\eta=0$, although not covered by the theorem, the solution
theory is just the standard regular case.
\end{rem}

\section{A Note on Generalizations of the Drude-Born-Fedorov Model}

By virtue of the power of the theoretical framework the above solution
strategies can be conveniently generalized. We may straightforwardly
generalize the Drude-Born-Fedorov model (\ref{eq:material-DBF}) by
imposing instead 
\begin{equation}
\left(\begin{array}{c}
D\\
B
\end{array}\right)=\left(\kappa\left(\partial_{0}^{-1}\right)+\overset{\diamond}{\curl}\right)M_{*}\left(\partial_{0}^{-1}\right)\left(\begin{array}{c}
E\\
H
\end{array}\right)\label{eq:material-DBF-mod}
\end{equation}
where
\[
\kappa\left(\partial_{0}^{-1}\right)=\kappa_{0}+\partial_{0}^{-1}\kappa_{1}\left(\partial_{0}^{-1}\right)
\]
and 
\[
M_{*}\left(\partial_{0}^{-1}\right)=M_{*,0}+\partial_{0}^{-1}M_{*,1}\left(\partial_{0}^{-1}\right)
\]
with $\kappa_{0},\: M_{*,0}$ selfadjoint, continuous, strictly positive
definite and $\kappa,\: M_{*}$ bounded, analytic in $B_{\mathbb{C}}\left(r,r\right)$,
for some $r\in\mathbb{R}_{>0}$. It should be noted that convolution
integral type generalizations of the Drude-Born-Fedorov model have
been previously suggested in \cite{A2002285}, but discussed only
for the time-harmonic case. Also, the particular model based on homogenization
with $\kappa\left(\partial_{0}^{-1}\right)=\eta^{-1}$, $M_{*,1}=\eta\epsilon\: k\times$,
$M_{*,0}=\eta\left(\begin{array}{cc}
\epsilon & 0\\
0 & \mu
\end{array}\right)$, $k\in\mathbb{R}^{3}$, has been suggested in \cite{0022-3727-41-15-155412}.

Assuming 
\[
-1\in\rho\left(\kappa_{0}^{-1}\overset{\diamond}{\curl}\right)
\]
as a replacement for assumption (\ref{eq:spec}) we find that $\left(\kappa\left(\partial_{0}^{-1}\right)+\overset{\diamond}{\curl}\right)$
is boundedly invertible for all sufficiently large $\nu\in\mathbb{R}_{>\frac{1}{2r}}$.
Indeed, in this case 
\begin{eqnarray*}
 &  & \left(\kappa\left(\partial_{0}^{-1}\right)+\overset{\diamond}{\curl}\right)^{-1}=\\
 &  & =\left(\kappa\left(\partial_{0}^{-1}\right)+\overset{\diamond}{\curl}\right)^{-1}\\
 &  & =\left(\kappa_{0}-\partial_{0}^{-1}\kappa_{1}\left(\partial_{0}^{-1}\right)+\overset{\diamond}{\curl}\right)^{-1}\\
 &  & =\left(1-Q_{0}\left(\partial_{0}^{-1}\right)\right)^{-1}\left(\kappa_{0}+\overset{\diamond}{\curl}\right)^{-1}
\end{eqnarray*}
with 
\begin{eqnarray*}
Q_{0}\left(\partial_{0}^{-1}\right) & := & \left(\kappa_{0}+\overset{\diamond}{\curl}\right)^{-1}\partial_{0}^{-1}\kappa_{1}\left(\partial_{0}^{-1}\right)
\end{eqnarray*}
as an ad-hoc abbreviation. Moreover, 
\begin{eqnarray*}
 &  & \left(\kappa\left(\partial_{0}^{-1}\right)+\overset{\diamond}{\curl}\right)^{-1}\overset{\diamond}{\curl}=\\
 &  & =\left(1-Q_{0}\left(\partial_{0}^{-1}\right)\right)^{-1}\left(\kappa_{0}+\overset{\diamond}{\curl}\right)^{-1}\overset{\diamond}{\curl}
\end{eqnarray*}
shows that 
\[
C=\left(\kappa\left(\partial_{0}^{-1}\right)+\overset{\diamond}{\curl}\right)^{-1}\overset{\diamond}{\curl}
\]
is a bounded operator. So, with our assumption $-1\in\rho\left(\kappa_{0}\overset{\diamond}{\curl}\right)$,
as a by-product of the proof of Theorem \ref{DBF0}, the well-posedness
result extends also to this generalization. The modified material
law for the analogue of (\ref{eq:ode}) is 
\[
M\left(\partial_{0}^{-1}\right)=M_{*}\left(\partial_{0}^{-1}\right)+\partial_{0}^{-1}C.
\]
Note that the initial condition now assumes the form
\[
\left(\left(\kappa\left(\partial_{0}^{-1}\right)+\overset{\diamond}{\curl}\right)M_{*}\left(\partial_{0}^{-1}\right)\left(\begin{array}{c}
E\\
H
\end{array}\right)\right)\left(0+\right)=\left(\kappa_{0}+\overset{\diamond}{\curl}\right)M_{0,*}\left(\begin{array}{c}
E\\
H
\end{array}\right)\left(0+\right)=W_{0}.
\]
In summary, we obtain as a simple corollary to the proof of Theorem
\ref{DBF0} (taking Remark \ref{rem-reso} into account):
\begin{cor}
\label{DBF1}Let \textbf{Hypothesis} $\mathbf{\Omega}$ hold and let
\[
\kappa\left(\partial_{0}^{-1}\right)=\kappa_{0}+\partial_{0}^{-1}\kappa_{1}\left(\partial_{0}^{-1}\right)
\]
and 
\[
M_{*}\left(\partial_{0}^{-1}\right)=M_{*,0}+\partial_{0}^{-1}M_{*,1}\left(\partial_{0}^{-1}\right)
\]
with $\kappa_{0},\: M_{*,0}$ selfadjoint, continuous, strictly positive
definite and $\kappa,\: M_{*}$ bounded, analytic in $B_{\mathbb{C}}\left(r,r\right)$,
for some $r\in\mathbb{R}_{>0}$. Then we have that for every
\begin{equation}
-\frac{1}{\eta}\in\rho\left(\overset{\diamond}{\curl}\right)\cap\mathbb{R},
\end{equation}
 
\begin{equation}
\mathcal{J}\in H_{\nu,0}\left(\mathbb{R},L^{2}\left(\Omega\right)\right)
\end{equation}
 and 
\begin{equation}
W_{0}\in L^{2}\left(\Omega\right)
\end{equation}
 the Drude-Born-Fedorov model 
\begin{equation}
\partial_{0}\left(\begin{array}{c}
D\\
B
\end{array}\right)+\left(\begin{array}{cc}
0 & -\overset{\diamond}{\curl}\\
\overset{\diamond}{\curl} & 0
\end{array}\right)\left(\begin{array}{c}
E\\
H
\end{array}\right)=\mathcal{J}+\delta\otimes W_{0}
\end{equation}
with
\begin{equation}
\left(\begin{array}{c}
D\\
B
\end{array}\right)=\left(\kappa\left(\partial_{0}^{-1}\right)+\overset{\diamond}{\curl}\right)M_{*}\left(\partial_{0}^{-1}\right)\left(\begin{array}{c}
E\\
H
\end{array}\right)\label{eq:material-DBF-1}
\end{equation}
 has a unique solution $\left(\begin{array}{c}
E\\
H
\end{array}\right)\in H_{\nu,0}\left(\mathbb{R},L^{2}\left(\Omega\right)\right)$ satisfying
\[
\left(\begin{array}{c}
D\\
B
\end{array}\right)\left(0+\right)=W_{0}
\]
in $H_{-1}\left(\overset{\diamond}{\curl}+\i\right)$. Moreover, the
solution depends continuously and causal on the right-hand side data.
\end{cor}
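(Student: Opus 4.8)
The plan is to follow the proof of Theorem~\ref{DBF0} step by step, with the operator $\left(1+\eta\:\overset{\diamond}{\curl}\right)$ replaced throughout by $\left(\kappa\left(\partial_{0}^{-1}\right)+\overset{\diamond}{\curl}\right)$ and the constant matrix of material coefficients replaced by $M_{*}\left(\partial_{0}^{-1}\right)$. First I would record the consequences of the resolvent hypothesis. Since $-\frac{1}{\eta}\in\rho\left(\overset{\diamond}{\curl}\right)\cap\mathbb{R}$ (equivalently $-1\in\rho\left(\kappa_{0}^{-1}\overset{\diamond}{\curl}\right)$ in the notation of the discussion preceding the corollary), Remark~\ref{rem-reso} gives $\overline{R\left(1+\eta\:\overset{\diamond}{\curl}\right)}=L^{2}\left(\Omega\right)$, so the projection $\pi_{\eta}$ of the proof of Theorem~\ref{DBF0} is the identity and no range restrictions on $\mathcal{J}$ or $W_{0}$ survive; this is exactly why the data in the corollary may be prescribed on all of $L^{2}\left(\Omega\right)$. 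Moreover $\left(\kappa_{0}+\overset{\diamond}{\curl}\right)$ is boundedly invertible on $L^{2}\left(\Omega\right)$, and, by the Neumann-series argument already carried out before the statement, $\left(\kappa\left(\partial_{0}^{-1}\right)+\overset{\diamond}{\curl}\right)$ is boundedly invertible for all sufficiently large $\nu$.

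Next I would left-multiply the Drude--Born--Fedorov equation, with the material law (\ref{eq:material-DBF-1}), by the bounded operator $\left(\kappa\left(\partial_{0}^{-1}\right)+\overset{\diamond}{\curl}\right)^{-1}$. Because this operator is a function of $\partial_{0}^{-1}$ and of the time-independent spatial operator $\overset{\diamond}{\curl}$, it commutes with $\partial_{0}$, so the leading term collapses to $\partial_{0}M_{*}\left(\partial_{0}^{-1}\right)\left(\begin{smallmatrix}E\\H\end{smallmatrix}\right)$, while the curl term $\overset{\diamond}{\curl}\left(\begin{smallmatrix}0&-1\\1&0\end{smallmatrix}\right)\left(\begin{smallmatrix}E\\H\end{smallmatrix}\right)$ becomes $C\left(\begin{smallmatrix}0&-1\\1&0\end{smallmatrix}\right)\left(\begin{smallmatrix}E\\H\end{smallmatrix}\right)$ with $C=\left(\kappa\left(\partial_{0}^{-1}\right)+\overset{\diamond}{\curl}\right)^{-1}\overset{\diamond}{\curl}$. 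Writing $C\left(\begin{smallmatrix}0&-1\\1&0\end{smallmatrix}\right)=\partial_{0}\,\partial_{0}^{-1}C\left(\begin{smallmatrix}0&-1\\1&0\end{smallmatrix}\right)$ exhibits the reduced system as an instance of (\ref{eq:evo-abstract}) with $A=0$ and
\[
M\left(\partial_{0}^{-1}\right)=M_{*}\left(\partial_{0}^{-1}\right)+\partial_{0}^{-1}C\left(\begin{array}{cc}0&-1\\1&0\end{array}\right),
\]
posed on $L^{2}\left(\Omega\right)$.

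The main technical point --- and really the only one beyond bookkeeping --- is to verify that this $M$ is a \emph{regular} material law in the sense required by Theorem~\ref{SolutionTheory}. Its value at $\partial_{0}^{-1}=0$ is $M_{0}=M_{*,0}$, which is selfadjoint, continuous and strictly positive by hypothesis. The remainder $M_{1}\left(z\right)=M_{*,1}\left(z\right)+C\left(z\right)\left(\begin{smallmatrix}0&-1\\1&0\end{smallmatrix}\right)$ must be shown to be $L\left(L^{2}\left(\Omega\right),L^{2}\left(\Omega\right)\right)$-valued, analytic and uniformly bounded on some ball $B_{\mathbb{C}}\left(r,r\right)$. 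For $C\left(z\right)=\left(1-Q_{0}\left(z\right)\right)^{-1}\left(\kappa_{0}+\overset{\diamond}{\curl}\right)^{-1}\overset{\diamond}{\curl}$ this follows from the uniform convergence of the Neumann series for $\left(1-Q_{0}\left(z\right)\right)^{-1}$, valid once $\nu$ is taken large enough that $\left\Vert Q_{0}\right\Vert <1$, together with the boundedness of $\left(\kappa_{0}+\overset{\diamond}{\curl}\right)^{-1}\overset{\diamond}{\curl}$ established before the corollary; analyticity is inherited from that of $\kappa$ and $M_{*}$. Fixing the threshold for $\nu$ here is the step that demands care.

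With $M$ identified as a regular material law, Theorem~\ref{SolutionTheory} yields a unique $\left(\begin{smallmatrix}E\\H\end{smallmatrix}\right)\in H_{\nu,0}\left(\mathbb{R},L^{2}\left(\Omega\right)\right)$ depending continuously and causally on the right-hand side, and the remark after Theorem~\ref{RegRes} (the case $A=0$) provides the initial value. Expanding $\left(\kappa\left(\partial_{0}^{-1}\right)+\overset{\diamond}{\curl}\right)^{-1}=\left(\kappa_{0}+\overset{\diamond}{\curl}\right)^{-1}+\partial_{0}^{-1}\left(\cdots\right)$ and applying it to $\delta\otimes W_{0}$ identifies the effective initial datum of the reduced problem as $\left(\kappa_{0}+\overset{\diamond}{\curl}\right)^{-1}W_{0}\in L^{2}\left(\Omega\right)$, the residual forcing lying in $H_{\nu,0}\left(\mathbb{R},L^{2}\left(\Omega\right)\right)$ and vanishing on $\mathbb{R}_{<0}$ by causality. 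Hence $\left(\begin{smallmatrix}E\\H\end{smallmatrix}\right)\left(0+\right)=M_{*,0}^{-1}\left(\kappa_{0}+\overset{\diamond}{\curl}\right)^{-1}W_{0}$, and applying $\left(\kappa_{0}+\overset{\diamond}{\curl}\right)M_{*,0}$ recovers $\left(\begin{smallmatrix}D\\B\end{smallmatrix}\right)\left(0+\right)=W_{0}$ in $H_{-1}\left(\overset{\diamond}{\curl}+\i\right)$. Finally, since the reduction was a left-multiplication by the boundedly invertible $\left(\kappa\left(\partial_{0}^{-1}\right)+\overset{\diamond}{\curl}\right)$, the reduced and original problems are equivalent, so existence, uniqueness, causality and continuous dependence transfer back to the original formulation, completing the proof.
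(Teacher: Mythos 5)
Your proposal is correct and follows essentially the same route as the paper: the paper's argument for Corollary~\ref{DBF1} is precisely the discussion preceding its statement, namely the Neumann-series inversion of $\kappa(\partial_{0}^{-1})+\overset{\diamond}{\curl}$ under the resolvent hypothesis, the resulting bounded operator $C$, and the reduction to the abstract evolutionary equation with $A=0$ and material law $M(\partial_{0}^{-1})=M_{*}(\partial_{0}^{-1})+\partial_{0}^{-1}C$, after which the conclusion is drawn "as a by-product of the proof of Theorem~\ref{DBF0}." Your additional care in expanding $(\kappa(\partial_{0}^{-1})+\overset{\diamond}{\curl})^{-1}\delta\otimes W_{0}$ to identify the effective initial datum $(\kappa_{0}+\overset{\diamond}{\curl})^{-1}W_{0}$ is a welcome elaboration of a step the paper only asserts.
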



\begin{thebibliography}{10}
\bibitem{1075.78005} Christodoulos Athanasiadis and Gary~F. Roach. \newblock {Time-dependent potential scattering in chiral media.} \newblock {\em J. Math. Anal. Appl.}, 310(1):1--15, 2005.
\bibitem{1126.78002} Patrick~jun. Ciarlet and Guillaume Legendre. \newblock {Well-posedness of the Drude-Born-Fedorov model for chiral media.} \newblock {\em Math. Models Methods Appl. Sci.}, 17(3):461--484, 2007.
\bibitem{pre05506047} Patrick~jun. Ciarlet and Guillaume Legendre. \newblock {Erratum: Well-posedness of the Drude-Born-Fedorov model for chiral   media.} \newblock {\em Math. Models Methods Appl. Sci.}, 19(1):173--174, 2009.
\bibitem{Condon37} E.~U. Condon. \newblock Theories of optical rotatory power. \newblock {\em Reviews of Modern Physics}, 9:0432--0457, 1937.
\bibitem{Filonov00} N.~Filonov. \newblock Spectral analysis of the selfadjoint operator $\mathrm{curl}$ in a   region of finite measure. \newblock {\em St. Petersburg Math. J.}, 11(6):1085--1095, 2000.
\bibitem{1052.78002} D.~J. Frantzeskakis, A.~Ioannidis, G.~F. Roach, I.~G. Stratis, and A.~N.   Yannacopoulos. \newblock {On the error of the optical response approximation in chiral media.} \newblock {\em Appl. Anal.}, 82(9):839--856, 2003.
\bibitem{Kato} T.~Kato. \newblock {\em Perturbation Theory for Linear Operators}, volume 132 of {\em   Grundlehren der mathematischen Wissenschaften}. \newblock Springer Verlag, Berlin, 2nd edition, 1976.
\bibitem{0866.00023} Akhlesh Lakhtakia. \newblock {\em {Beltrami Fields in Chiral Media.}} \newblock {Singapore: World Scientific., 568 p.}, 1994.
\bibitem{Leis:Buch:2} R.~Leis. \newblock {\em Initial boundary value problems in mathematical physics}. \newblock John Wiley \& Sons Ltd. and B.G. Teubner; Stuttgart, 1986.
\bibitem{1177.35042} K.B. Liaskos, I.G. Stratis, and A.N. Yannacopoulos. \newblock {A priori estimates for a singular limit approximation of the   constitutive laws for chiral media in the time domain.} \newblock {\em J. Math. Anal. Appl.}, 355(1):288--302, 2009.
\bibitem{LSTV} I.~V. Lindell, A.~H. Sihvola, S.~A. Tretyakov, and A.~J. Viitanen. \newblock {\em {Electromagnetic waves in chiral and bi-isotropic media}}. \newblock Artech House, Boston and London, 1994.
\bibitem{A2002285} A.~Morro. \newblock Modelling of optically active electromagnetic media. \newblock {\em Applied Mathematics Letters}, 15(3):285 -- 291, 2002.
\bibitem{0579.58030} R.~Picard. \newblock {On a structural observation in generalized electromagnetic theory.} \newblock {\em J. Math. Anal. Appl.}, 110:247--264, 1985.
\bibitem{0926.35104} R.~Picard. \newblock {On a selfadjoint realization of curl and some of its applications.} \newblock {\em Ric. Mat.}, 47(1):153--180, 1998.
\bibitem{0935.35029} R.~Picard. \newblock {On a selfadjoint realization of curl in exterior domains.} \newblock {\em Math. Z.}, 229(2):319--338, 1998.
\bibitem{Pi2009-1} R.~Picard. \newblock {A Structural Observation for Linear Material Laws in Classical   Mathematical Physics.} \newblock {\em {Math. Methods Appl. Sci.}}, 32(14):1768--1803, 2009.
\bibitem{2009-2} R.~Picard. \newblock {On a Class of Linear Material Laws in Classical Mathematical   Physics.} \newblock {\em Int. J. Pure Appl. Math.}, 50(2):283--288, 2009.
\bibitem{PIC_2010:1889} R.~Picard. \newblock {An Elementary Hilbert Space Approach to Evolutionary Partial   Differential Equations}. \newblock {\em Rend. Istit. Mat. Univ. Trieste}, 42 suppl.:185--204, 2010.
\bibitem{PDE_DeGruyter} R.~Picard and D.~F. McGhee. \newblock {\em Partial Differential Equations: A unified Hilbert Space   Approach}, volume~55 of {\em {De Gruyter Expositions in Mathematics}}. \newblock {De Gruyter. Berlin, New York. 518 p.}, 2011.
\bibitem{Picard2001} Rainer Picard, Norbert Weck, and Karl-Josef Witsch. \newblock {Time-harmonic Maxwell equations in the exterior of perfectly   conducting, irregular obstacles.} \newblock {\em Analysis}, 21:231--263, 2001.
\bibitem{0022-3727-41-15-155412} Daniel Sj{\"o}berg. \newblock {A modified Drude-Born-Fedorov model for isotropic chiral media,   obtained by finite scale homogenization}. \newblock {\em Journal of Physics D: Applied Physics}, 41(15):155412, 2008.
\bibitem{1062.78004} Ioannis~G. Stratis and Athanasios~N. Yannacopoulos. \newblock {Electromagnetic fields in linear and nonlinear chiral media: a   time-domain analysis.} \newblock {\em Abstr. Appl. Anal.}, 2004(6):471--486, 2004.
\bibitem{Yosida_6th} K.~Yosida. \newblock {\em Functional Analysis}. \newblock Springer-Verlag; Berlin et al., 6th edition, 1980.
\end{thebibliography}
\end{document}